\newtheorem{thm}{Theorem}[section]
\newtheorem{prop}[thm]{Proposition}
\newtheorem{lemma}[thm]{Lemma}
\def\Ker{\mathop{\rm {Ker}}\nolimits}
\begin{document}
\title{Strongly irreducible operators and indecomposable  
representations of quivers on infinite-dimensional Hilbert spaces}
\author{Masatoshi Enomoto}
\address[Masatoshi Enomoto]{Institute of Education and Research, Koshien University, Takarazuka, Hyogo 665-0006, Japan}
\author{Yasuo Watatani}
\address[Yasuo Watatani]{Department of Mathematical Sciences, 
Kyushu University, Motooka, Fukuoka, 819-0395, Japan}
\maketitle

\begin{abstract}
We study several classes of indecomposable  representations of quivers on 
infinite-dimensional Hilbert spaces and their relation.
Many examples are constructed using 
strongly irreducible operators. Some problems in operator theory are 
rephrased in terms of representations of quivers.  
We shall show two kinds of constructions of quite non-trivial 
indecomposable Hilbert representations $(H,f)$ of 
the Kronecker quiver such that 
$End(H,f) ={\mathbb C} I$ which is called transitive. One is 
a perturbation of a 
weighted  shift operator by a rank-one operator. 
The other one is a modification of an unbounded operator 
used by  Harrison,Radjavi and Rosenthal to 
provide a transitive lattice.

\medskip

\noindent KEYWORDS: strongly irreducible operators, 
quiver, indecomposable representation,  Hilbert space.

\medskip

\noindent AMS SUBJECT CLASSIFICATION: Primary 47A65, Secondary 
46C07,47A15, 15A21, 16G20, 16G60.
\end{abstract}

\section{Introduction}

A bounded  operator $T$ on a Hilbert space $H$ is said to be 
strongly irreducible if $T$ cannot be decomposed to a 
non-trivial (not necessarily orthogonal) direct sum of two operators, 
that is, if 
there exist no non-trivial invariant (closed) subspaces $M$ and $N$ 
of $T$ such that $M \cap N = 0$ and $M + N = H$. 
A strongly irreducible operator is an 
infinite-dimensional analog of a Jordan block. 
The notion of strongly irreducible operator was 
introduced by F. Gilfeather in \cite{Gi}. 
We refer to 
good monographs \cite{JW1} and  \cite{JW2} by 
Jiang and Wang on 
strongly irreducible operators. 

On the other hand Gabriel \cite{Ga} introduced a 
finite-dimensional (linear) representations of quivers by 
attaching vector spaces and linear maps for 
vertices and edges of quivers respectively.   
A finite-dimensional indecomposable representation 
of a quiver is a direct graph generalization of a Jordan block.   
We regard indecomposable representation of  a quiver on a 
Hilbert space as an infinite-dimensional generalization of 
both  a Jordan block and a finite-dimensional 
indecomposable representation of a quiver.  
We study several classes of indecomposable  representations of quivers on 
infinite-dimensional Hilbert spaces and their relation.
Many examples of indecomposable representations of quivers
are constructed using 
strongly irreducible operators. Moreover 
some problems in operator theory are 
rephrased in terms of representations of quivers.

Remember that we studied the relative positions of subspaces in a separable 
infinite-dimensional Hilbert space in \cite{EW1} after  Nazarova \cite{Na1}, 
 Gelfand and 
Ponomarev \cite{GP}. We shall describe a close relation  between 
the relative positions of subspaces in Hilbert spaces and 
Hilbert representations of quivers in the final section.

In our paper we only need the beginning of the theory of representations 
of quivers on finite-dimensional vector space, 
for example, see Bernstein-Gelfand-Ponomarev \cite{BGP}, 
Donovan-Freislish \cite{DF}, V. Dlab-Ringel \cite{DR}, 
Gabriel-Roiter \cite{GR}, 
Kac \cite{Ka}, Nazarova \cite{Na2} \dots .

We should remark that  locally  scalar representations of quivers 
in the category of Hilbert spaces were introduced by 
Kruglyak and Roiter \cite{KRo}. They associate 
operators and their adjoint operators with arrows 
and classify them up to the unitary 
equivalence.  They proved an 
analog of Gabriel's theorem.  Their study is connected 
with representations of *-algebras generated by 
linearly related orthogonal projections
, see for example, S. Kruglyak, V. Rabanovich and Y. Samoilenko
\cite{KRS} and Y. P. Moskaleva and Y. S. Samoilenko \cite{MS}.

In \cite{EW3}, we constructed an indecomposable infinite-dimensional 
Hilbert representation for any quiver whose underlying undirected 
graph is one of extended Dynkin diagrams 
$\tilde{A_n} \ (n \geq 0)$,
$ \tilde{D_n} \ (n \geq 4)$, 
$\tilde{E_6}$,$\tilde{E_7}$ and $\tilde{E_8}$,  
using the unilateral shift $S$. If we replace the unilateral shift $S$ 
there by any strongly irreducible operator, then the corresponding 
Hilbert representation is still indecomposable by the same 
calculation. This fact also suggests us to use strong irreducible 
operators to construct indecomposable Hilbert representations 
of quivers.

We recall infinite-dimensional representations in 
purely algebraic setting. In 
\cite{Au} Auslander  found that if a finite-
dimensional algebra 
is not of finite representation type,
then there exist indecomposable modules which are not of finite length.
Ringel \cite{Ri1} developed 
  a general theory of infinite-dimensional  
representations of tame, 
hereditary algebra. There exist many works after them and 
they form an active area of research in representation
theory of 
algebras.

In our paper we study infinite-dimensional 
Hilbert (space) representations 
 of quivers 
using operator theory.  We note that  
there exist subtle difference among purely algebraic infinite-dimensional 
representations of quivers, infinite-dimensional Banach (space) 
representations of quivers and infinite-dimensional Hilbert (space)
representations of quivers. We also note that the analytic aspect of 
Hardy space is quite important
 in our setting. For example, if 
the endomorphism algebra of a Hilbert representation of a quiver 
is isomorphic to  the Hardy algebra $H^{\infty}({\mathbb T})$, 
then the representation is indecomposable, because the 
the Hardy algebra $H^{\infty}({\mathbb T})$ has no non-trivial 
idempotents by the F. and M. Riesz Theorem. This is indeed the case of 
the Hilbert representation  corresponding to 
the unilateral shift operator. In this way 
we believe  that the analytic operator
 algebra theory will come  in here.

We shall show  two kinds of constructions of quite non-trivial 
indecomposable Hilbert representation $(H,f)$ of 
the Kronecker quiver such that 
$End(H,f) ={\mathbb C} I$ which is called transitive. One is 
a perturbation of a 
weighted  shift operator by a rank-one operator. 
This is an analogue of a construction of 
indecomposable representations using linear functionals 
on the space $K(x)$ of rational functions over 
an algebraically closed 
field $K$ studied in representation theory of algebras, for example, see
Ringel \cite{Ri2}, Fixmann \cite{Fi}, 
Okoh \cite{Ok}  and Dean- Zorzitto \cite{DZ}. 
We replace the rational function field $K(x)$ by Hardy spaces 
$H^{\infty}({\mathbb T})$ or $H^2({\mathbb T})$ properly in our setting . 
We have an analogy of ring extension  between 
$({\mathbb C}[x] \subset {\mathbb C}(x))$ and 
$({\mathbb C}[x] \subset H^{\infty}({\mathbb T}))$ in mind. 
Our analogy is supported by an important fact that 
both the rational function field ${\mathbb C}(x)$ and 
the Hardy algebra $H^{\infty}({\mathbb T})$ have no non-trivial idempotents.
But we warn the readers of subtle differences among them.

The other construction of transitive representations of 
the Kronecker quiver is given by a modification of an unbounded operator 
used by  Harrison,Radjavi and Rosenthal \cite{HRR} to 
provide a transitive lattice.

\section{Hilbert representations of quivers}

A quiver $\Gamma=(V,E,s,r)$ is a quadruple consisting of 
the set $V$ of vertices, the set $E$ of arrows, 
and two maps $s, r : E \rightarrow V$, which 
associate with each arrow $\alpha \in E$ its 
support $s(\alpha)$ and range  $r(\alpha)$. We 
sometimes denote by $\alpha : x \rightarrow y$ 
an arrow with $x = s(\alpha)$ and $y = r(\alpha)$. 
Thus a quiver is just a directed graph. We 
denote by $|\Gamma|$ 
the underlying undirected graph of a quiver $\Gamma$. 
A quiver $\Gamma$ is said to be connected if $|\Gamma|$ 
is a connected graph. A quiver $\Gamma$ is said to be finite 
if both $V$ and $E$ are finite sets.  
A path of length $m$ 
is a finite sequence $\alpha = (\alpha_1, \dots,\alpha_m)$ 
of arrows such that $r(\alpha_k) = s(\alpha_{k+1})$ for 
$k = 1,\dots, m-1$. Its support is $s(\alpha) = s(\alpha_1)$ 
and its range is $r(\alpha) = r(\alpha_m)$. A path of 
length $m \geq 1$ is called a {\it cycle } if its support 
and range coincide. A cycle of length one is called  a 
{\it loop}. A quiver is called {\it acyclic} if it contains 
no cycles. 

\bigskip

\noindent  
{\bf Definition.}
Let $\Gamma=(V,E,s,r)$ be a finite quiver. We say 
that $(H,f)$ is a  {\it Hilbert representation} of $\Gamma$ 
if $H=(H_{v})_{v\in V}$  is a family of  Hilbert spaces 
and $f=(f_{\alpha})_{\alpha \in E}$ is a family of
 bounded linear operators such that $f_{\alpha} : 
H_{s(\alpha)}\rightarrow H_{r(\alpha)}$ for $\alpha \in E$. 

\bigskip

\noindent  
{\bf Definition.} Let $\Gamma=(V,E,s,r)$ be a finite quiver. 
Let $(H,f)$ and $(K,g)$ be Hilbert representations of $\Gamma.$ 
A {\it homomorphism} $T : (H,f) \rightarrow (K,g)$  is a 
family $T =(T_{v})_{v\in V}$ of bounded operators 
$T_v : H_v \rightarrow K_v$ satisfying,  
for any arrow $\alpha \in E$ 
$$
T_{r(\alpha)}f_{\alpha}=g_{\alpha}T_{s(\alpha)}. 
$$
The composition $T \circ S$ of homomorphisms $T$ and $S$ is defined 
by the composition of operators:  
$(T \circ S)_v = T_v S_v$ for $v\in V$.  
Thus we have obtained 
a category $HRep (\Gamma)$ of Hilbert representations of $\Gamma$  

We denote by 
$Hom((H,f),(K,g))$ the set of homomorphisms 
$T : (H,f) \rightarrow (K,g)$. 
We denote by  $End (H,f): =Hom((H,f),(H,f))$  
the set of endomorphisms. Then we can regard 
$End (H,f)$ as a  subalgebra of $\oplus _{v \in V}B(H_v)$.

In the paper we carefully distinguish the 
following two classes of operators. 
A bounded operator $A$ on a Hilbert space is 
called an {\it idempotent}  if $A^2 = A$ and 
$A$ is said to be a {\it projection} if $A^2 = A = A^*$.   
 We denote by 
\begin{align*}
& Idem (H,f) : =\{T\in End (H,f) \ | \ T ^2 = T \} \\
&= \{T=(T_v)_{v \in V} \in End (H,f) \ | \ T_v ^2 = T_v \ 
({\text for \ any }\  v \in V) \}
\end{align*}
the set of idempotents of $End (H,f)$.  
Let $0 = (0_{v})_{v \in V}$ 
be a family of zero operators $0_{v}$ and 
$I = (I_{v})_{v\in V}$ 
be a family of identity operators $I_{v}$. 
The both endomorphisms $0$ and $I$ are in $Idem(H,f)$.

 Let $\Gamma=(V,E,s,r)$ be a finite quiver and 
$(H,f)$, $(K,g)$  be Hilbert representations of $\Gamma.$ 
We say that
$(H,f)$ and $(K,g)$ are {\it isomorphic}, denoted by  
$(H,f)\cong (K,g)$, 
if there exists an isomorphism $\varphi : (H,f) \rightarrow (K,g)$, 
that is, there exists a family  
$\varphi=(\varphi_{v})_{v\in V}$ of bounded invertible operators
$\varphi_{v}\in B(H_{v},K_{v})$ such that, for any arrow 
$\alpha \in E$, 
$$
\varphi_{r(\alpha)}f_{\alpha}=g_{\alpha
}\varphi_{s(\alpha)}.
$$
Hilbert representations $(H,f)$ and $(K,g)$ of $\Gamma$ 
are said to be 
{\it relatively prime} if 
$Hom((H,f),(K,g)) = 0$ and $Hom((K,g),(H,f)) =0 $. 
If two non-zero Hilbert representations $(H,f)$ and $(K,g)$ are  
relatively prime, then they are not isomorphic.

We say that $(H,f)$ is a 
finite-dimensional representation if $H_v$ is 
finite-dimensional for all $v \in V$. 
And $(H,f)$ is an  
infinite-dimensional representation if $H_v$ is 
infinite-dimensional for some $v \in V$.

We  shall recall  a notion of 
indecomposable representation in \cite{EW3},   
that is, a representation  which cannot be decomposed into a direct 
sum of smaller representations  anymore. 

\bigskip

\noindent  {\bf Definition.}(Direct sum) Let $\Gamma=(V,E,s,r)$ 
be a finite quiver. Let
$(K,g)$ and $(K',g')$ be Hilbert representations
of $\Gamma.$  Define the direct sum 
$(H,f)=(K,g)\oplus(K',g')$  by 
$$
H_{v}=K_{v}\oplus K'_{v}\ (\text{ for } v\in V) 
\ \text{ and } \ 
f_{\alpha}=g_{\alpha }\oplus g'_{\alpha } \ 
(\text{ for } \alpha \in E).
$$

We say that a Hilbert representation $(H,f)$ is zero, denoted by 
 $(H,f) =0 $,  if   
$H_v = 0$ for any $v \in V$. 

\bigskip

\noindent 
 {\bf Definition.}(Indecomposable representation)
A Hilbert representation $(H,f)$ of $\Gamma$ 
is called  {\it decomposable} if 
$(H,f)$ is isomorphic to a direct sum of two 
non-zero Hilbert representations.  
A non-zero Hilbert representation $(H,f)$ of $\Gamma$ 
is said to be  {\it indecomposable} if 
it is not decomposable, that is, 
if $(H,f)\cong(K,g) \oplus (K',g')$ 
then $(K,g) \cong 0$ or 
$(K',g') \cong 0$.

The following proposition is used frequently 
to show the indecomposability in concrete examples.

\begin{prop}\rm{(}\cite[Proposition 3.1.]{EW3}\rm{)} 
 Let $(H,f)$be a Hilbert representation of a quiver $\Gamma$.
Then the following conditions are equivalent: 
\begin{enumerate}
\item $(H,f)$ is indecomposable.
\item $ Idem(H,f) = \{0,I\}$. 
\end{enumerate}
\label{prop:indecomposable-idempotent}
\end{prop}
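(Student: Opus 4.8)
The plan is to prove both implications by contraposition, exploiting the standard correspondence between idempotent endomorphisms and direct-sum decompositions, while keeping careful track of the fact that in the Hilbert setting an idempotent need not be a self-adjoint projection, so the resulting decompositions are topological but not orthogonal.

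For (2) $\Rightarrow$ (1), I would argue the contrapositive. Suppose $(H,f) \cong (K,g) \oplus (K',g')$ with both summands non-zero, via an isomorphism $\varphi=(\varphi_v)$. On the external direct sum, let $P=(P_v)$ be the family in which each $P_v$ is the block-diagonal idempotent of $K_v \oplus K'_v$ onto the first coordinate; then $P_{r(\alpha)}(g_\alpha \oplus g'_\alpha) = (g_\alpha \oplus g'_\alpha)P_{s(\alpha)}$ for every arrow, so $P$ is an endomorphism of the direct sum. Transporting it by $\varphi$, that is, setting $T_v = \varphi_v P_v \varphi_v^{-1}$, produces $T \in End(H,f)$ with $T^2 = T$. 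Since $K \neq 0$ we have $T \neq 0$, and since $K' \neq 0$ we have $T \neq I$; hence $Idem(H,f) \neq \{0,I\}$.

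For (1) $\Rightarrow$ (2), again by contraposition, suppose there is $T=(T_v) \in Idem(H,f) \setminus \{0,I\}$. Each $T_v$ is a bounded idempotent on $H_v$, so putting $K_v := \Im T_v = \Ker(I_v - T_v)$ and $K'_v := \Ker T_v = \Im(I_v - T_v)$ gives two closed subspaces with $H_v = K_v \oplus K'_v$ as an internal topological direct sum. The endomorphism relation $f_\alpha T_{s(\alpha)} = T_{r(\alpha)} f_\alpha$ shows that $f_\alpha$ carries $K_{s(\alpha)}$ into $K_{r(\alpha)}$ and $K'_{s(\alpha)}$ into $K'_{r(\alpha)}$; restricting $f_\alpha$ accordingly defines $g_\alpha$ and $g'_\alpha$, hence Hilbert representations $(K,g)$ and $(K',g')$. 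The sum map $\varphi_v : K_v \oplus K'_v \to H_v$, $(\xi,\eta) \mapsto \xi + \eta$, intertwines $g \oplus g'$ with $f$ and is a bounded isomorphism whose inverse $h \mapsto (T_v h,\,(I_v - T_v)h)$ is again bounded. Thus $(H,f) \cong (K,g)\oplus(K',g')$, and because $T \neq 0$ and $T \neq I$ both summands are non-zero, so $(H,f)$ is decomposable.

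The main obstacle is precisely the non-orthogonality of the decomposition: unlike the $*$-algebra setting the idempotents here are not projections, so $K_v$ and $K'_v$ need not be orthogonal complements. The two facts that make the argument go through are that a bounded idempotent automatically has closed range, via $\Im T_v = \Ker(I_v - T_v)$, and that the sum map has a bounded inverse given explicitly by $T_v$ and $I_v - T_v$; together these guarantee that $K_v$ and $K'_v$ are genuine Hilbert spaces and that $\varphi$ is a topological isomorphism of representations. Verifying the intertwining identities for $\varphi$ and for the transported idempotent $T$ is then a routine check that I would not belabor.
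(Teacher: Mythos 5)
Your proof is correct: both contrapositive directions are sound, and the two analytic points you isolate --- that a bounded idempotent has closed range via $\Im T_v = \Ker(I_v - T_v)$, and that the sum map $K_v \oplus K'_v \to H_v$ has the explicit bounded inverse $h \mapsto (T_v h, (I_v - T_v)h)$ --- are exactly what is needed to stay within the category of Hilbert representations. The paper itself gives no proof (it cites \cite[Proposition 3.1]{EW3}), but your argument is the standard idempotent--decomposition correspondence, and the internal decomposition you build in the direction $(1)\Rightarrow(2)$ is precisely the characterization recorded in the Remark immediately following the proposition, so your route coincides with the intended one.
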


\bigskip
\noindent 
{\bf Remark.}
  $(H,f)$ is decomposable if and only if there exist 
families  $K = (K_x)_{x\in V}$ and 
$K' = (K'_x)_{x\in V}$ of  
closed subspaces $K_x$ and $K'_x$ of $H_x$ with 
$K_x \cap K'_x = 0$ and $K_x + K'_x = H_x$ 
satisfying $K$ and $K'$ are non-zero 
such that 
 $f_{\alpha} K_x \subset  K_y$  and 
$f_{\alpha} K'_x \subset  K'_y$ for any arrow  
$\alpha : x \rightarrow y $.

The indecomposability of Hilbert representations of 
a quiver is 
an isomorphic invariant,  
but it is {\it not} a unitarily equivalent 
invariant. Therefore we 
cannot replace 
the set  $Idem (H,f)$ of idempotents of endomorphisms 
by the subset of idempotents  of endomorphisms which are 
consists of projections to show the indecomposability. 

\bigskip

\noindent 
{\bf Example 1.} 
Let $H_0= \mathbb C ^2$. Fix an angle $\theta$
with $0 < \theta < \pi /2$.  Consider one-dimensional 
subspaces $H_1 = \mathbb C(1,0)$ and
$H_2 = \mathbb C(\cos \theta, \sin \theta)$ of $H_0$ 
spanned by vectors $(1,0)$ and $(\cos\theta, \sin\theta)$
in $H_0$. 
Consider the following quiver $\Gamma$ : 
\[
\circ_1 \overset{\alpha_1}{\longrightarrow}
 \circ_0 \overset{\alpha_2}\longleftarrow \circ_2
\]
Define a Hilbert 
representation $(H,f)$ of $\Gamma$ by $H = (H_i)_{i= 0,1,2}$ and 
canonical inclusion maps $f_i = f_{\alpha _i} : 
H_i \rightarrow H_0$ 
for $i = 1,2$. Then the Hilbert representation $(H,f)$ is 
decomposable.  But if  an idempotent $P=(P_v)_{v \in V} \in End (H,f)$ 
satisfies that  $P_v$ is a projection for any $v \in V$, 
then $P = 0$ or $P = I$. In fact, 
since $H_0 = H_1 + H_2$ and $H_1 \cap H_2 = 0$, 
for any $x \in H_0$, there exist unique $x_1 \in H_1$ and $x_2 \in H_2$ 
such that $x = x_1 + x_2$. There exists an idempotent 
$T_0: H_0 \rightarrow H_0$ such that $T_0x = x_1$. 
Put $T_1 = id: H_1 \rightarrow H_1$ and 
$T_2 = 0: H_2 \rightarrow H_2$. Then 
$T = (T_i)_{i= 0,1,2}$ is an idempotent in 
$End (H,f)$ such that  $T \not= 0$ and $T \not=I$. Hence 
$(H,f)$ is decomposable.  But take any idempotent
 $P=(P_v)_{v \in V} \in End (H,f)$ 
such that  $P_v$ is a projection for any $v \in V$. Then $P_0H_1 \subset H_1$ 
and $P_0H_2 \subset H_2$. Let $E_i$ be the projection of $H_0$  onto $H_i$ for 
$i = 1,2$. Since $P_0$ is self adjoint, 
 $P_0$ commutes with $E_1$ and $E_2$. Since the  $C^*$-algebra 
generated by $E_1$ and $E_2$ is $B(H_0) = M_2({\mathbb C})$ and 
$P_0$ is a projection, $P_0 = 0$ or $P_0 = I.$ If $P_0 = 0$, then 
$P_1= 0$ and $P_2 = 0$. Similarly if  $P_0 = I$, then 
$P_1= I$ and $P_2 = I$. Hence $P =0$ or $P = I$. 
We remark that  the system 
$(H_0;H_1,H_2)$ of two subspaces is isomorphic to 
\[ 
({\mathbb C}^2 ; {\mathbb C}\oplus 0, 0 \oplus {\mathbb C}) 
\cong (\mathbb C; \mathbb C, 0) \oplus (\mathbb C; 0, \mathbb C).
\]
Hence the relative position of two subspaces 
$(H_0;H_1,H_2)$ is decomposable. 
See \cite[Remark after Proposition 3.1.]{EW3}. 

\bigskip
\noindent  
{\bf Definition.}
A non-zero Hilbert representation $(H,f)$ of a quiver 
$\Gamma$ is called {\it irreducible} if 
 $P=(P_v)_{v \in V} \in End (H,f)$ is an idempotent and 
 $P_v$ is a projection for any $v \in V$, then $P = 0$ or 
$P = I$.  A non-zero Hilbert representation 
$(H,f)$ is not irreducible  if and only if there exist 
families  $K = (K_x)_{x\in V}$ and 
$K' = (K'_x)_{x\in V}$ of 
closed subspaces $K_x$ and $K'_x$ of $H_x$ with 
$K_x \perp  K'_x $ and $K_x + K'_x = H_x$ 
satisfying  $K$ and $K'$ are non-zero 
such that 
 $f_{\alpha} K_x \subset  K_y$  and 
$f_{\alpha} K'_x \subset  K'_y$ for any arrow  
$\alpha : x \rightarrow y $. 
For example, the Hilbert representation $(H,f)$ 
in Example 1 above is irreducible but is not indecomposable. 
We should be careful that  irreducibility 
is a unitarily invariant notion and not a 
isomorphically  invariant notion.

\bigskip

We recall the following elementary but fundamental relation 
between Hilbert representation theory of quivers and single operator theory:

\begin{thm}[\cite{EW3}]
Let $L_1$ be  one-loop quiver, so that $L_1$ has  one vertex $1$ and 
one arrow $\alpha : 1 \rightarrow 1$. The underlying 
undirected graph is an extended Dynkin 
diagram $\tilde{A_0}$.  For a bounded operator $A$ 
on a Hilbert space $H$, consider a 
Hilbert representation $(H^A,f^A)$ of $L_1$ such that $H^A_1 =H$ 
and $f^A_{\alpha} = A$. Then  the Hilbert representation 
$(H^A,f^A)$ is indecomposable 
if and only if $A$ is strongly irreducible. The endomorphism ring 
$End (H,f)$ can be identified with the commutant $\{A\}'$. 
Moreover two bounded operators $A$ and $B$ are 
similar if and only if the corresponding 
Hilbert representations $(H^A,f^A)$ and $(H^B,f^B)$are isomorphic. 
\end{thm}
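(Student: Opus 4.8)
The plan is to treat the three assertions separately, deriving each directly from the definitions, with Proposition~\ref{prop:indecomposable-idempotent} carrying the weight for the indecomposability criterion. First I would pin down the endomorphism algebra, since that identification underlies the other two parts. As $L_1$ has a single vertex $1$ and the single arrow $\alpha : 1 \to 1$, a homomorphism $T = (T_1)$ in $End(H^A,f^A)$ is nothing but a bounded operator $T_1 : H \to H$, and the defining intertwining relation $T_{r(\alpha)}f^A_\alpha = f^A_\alpha T_{s(\alpha)}$ collapses to $T_1 A = A T_1$. Hence $T \mapsto T_1$ is an algebra isomorphism $End(H^A,f^A) \cong \{A\}'$, which establishes the second claim at once.

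Next, for the equivalence between indecomposability and strong irreducibility, I would invoke Proposition~\ref{prop:indecomposable-idempotent}: the representation $(H^A,f^A)$ is indecomposable if and only if $Idem(H^A,f^A) = \{0,I\}$. Under the identification above, $Idem(H^A,f^A)$ corresponds exactly to the set of bounded idempotents $P$ with $P^2 = P$ and $PA = AP$. The key step is the correspondence between such idempotents and non-trivial (not necessarily orthogonal) direct-sum decompositions of $H$ into $A$-invariant closed subspaces. Given an idempotent $P \in \{A\}'$, I would set $M = \Im P$ and $N = \Ker P$: both are closed (note $\Im P = \Ker(I-P)$), one has $H = M + N$ and $M \cap N = 0$, and the relation $PA = AP$ forces $AM \subset M$ and $AN \subset N$. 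Conversely, from closed $A$-invariant subspaces $M,N$ with $M \cap N = 0$ and $M + N = H$, the algebraic projection $P$ onto $M$ along $N$ commutes with $A$ and is idempotent. Thus $Idem(H^A,f^A) = \{0,I\}$ precisely when no such non-trivial decomposition exists, i.e. precisely when $A$ is strongly irreducible.

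Finally, for the similarity statement, an isomorphism $\varphi = (\varphi_1) : (H^A,f^A) \to (H^B,f^B)$ is by definition a bounded invertible operator $\varphi_1$ satisfying $\varphi_1 A = B \varphi_1$, equivalently $B = \varphi_1 A \varphi_1^{-1}$; this is exactly the assertion that $A$ and $B$ are similar. Reading this equivalence in both directions yields the claim, and the construction is visibly functorial so no further bookkeeping is needed.

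The hard part will be the converse half of the idempotent/decomposition correspondence: one must check that the projection determined by a non-orthogonal direct sum $H = M + N$ is genuinely bounded, not merely linear. This is where the closed graph theorem (equivalently, the fact that an idempotent with closed range and closed kernel is automatically bounded) is essential, and it is precisely the point at which the non-orthogonal direct sum in the definition of strong irreducibility is reconciled with the bounded idempotents living in $End(H^A,f^A)$.
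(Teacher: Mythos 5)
Your proposal is correct and is exactly the canonical argument: the paper itself states this theorem without proof (citing \cite{EW3}), and your route --- identifying $End(H^A,f^A)$ with $\{A\}'$, invoking Proposition \ref{prop:indecomposable-idempotent}, and matching idempotents in $\{A\}'$ with (not necessarily orthogonal) decompositions $H = M + N$ into closed $A$-invariant subspaces --- is the intended one, given that the paper's definition of strong irreducibility is phrased precisely in terms of such decompositions. You also correctly isolate the one genuinely analytic point, namely that the algebraic projection onto $M$ along $N$ is bounded (closed graph theorem), which is what reconciles the subspace-lattice definition with the bounded idempotents of $End(H^A,f^A)$.
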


Therefore  it is fruitful to regard 
the study of indecomposable Hilbert representations of general quivers 
as a generalization of the study of strongly irreducible operators. 
\bigskip

\noindent
{\bf Example 2.} Consider one-loop quiver $L_1$ as above. 
Let $H =  \ell^2(\mathbb N)$ and 
$S \in B(H)$ the  unilateral shift. Then 
$(H^S,f^S)$ is indecomposable. 

\bigskip
\noindent  
{\bf Definition.} Let $\Gamma=(V,E,s,r)$ be a finite quiver and 
$(H,f)$ a Hilbert representation of $\Gamma$. 
A Hilbert representation $(K,g)$ of $\Gamma$ is called a 
{\it subrepresentation} of $(H,f)$ if for any vertex $v \in V$, $K_v$ is a 
(closed) subspace of $H_v$ and for any edge $\alpha \in E$, 
$g_{\alpha} = f_{\alpha}|_{K_{s(\alpha)}}$. In particular 
we have  $f_{\alpha}(K_{s(\alpha)}) \subset K_{r(\alpha)}$. 

\bigskip
\noindent  
{\bf Definition.}
A non-zero Hilbert representation $(H,f)$ of a quiver 
$\Gamma$ is called {\it simple} 
if $(H,f)$ has only trivial subrepresentations $0$ and $(H,f)$. 
A Hilbert representation $(H,f)$ of $\Gamma$ is called 
{\it canonically simple } if 
there exists a vertex $v_{0}\in V$ such that $H_{v_{0}}=\mathbb{C}$,  
$H_{v}=0$ for any other vertex
$v\neq v_{0}$ and $f_{\alpha}=0$
for any $\alpha\in E$. It is clear that if 
a Hilbert representation $(H,f)$ of $\Gamma$ is 
canonically simple, then $(H,f)$ is simple. 
It is trivial that if 
a Hilbert representation $(H,f)$ of $\Gamma$ is 
simple, then $(H,f)$ is indecomposable.

We can rephrase the invariant subspace problem 
in terms of simple representations of a one-loop quiver. 
Let $L_1$ be  one-loop quiver, so that $L_1$ has  one vertex $1$ and 
one arrow $\alpha : 1 \rightarrow 1$.  Any bounded operator $A$ 
on a non-zero Hilbert space $H$ gives a 
Hilbert representation $(H^A,f^A)$ of $L_1$ such that $H^A_1 =H$ 
and $f^A_{\alpha} = A$.
Then the operator $A$ has only trivial invariant subspaces 
if and only if the Hilbert representation $(H^A,f^A)$ of $L_1$ is simple. 
If $H$ is one-dimensional and $A$ is a non-zero scalar operator,
then the Hilbert representation $(H^A,f^A)$ of $L_1$
is simple but is not canonically simple.
If $H$ is finite-dimensional with $\dim H \geq 2$, 
then the Hilbert representation $(H^A,f^A)$ of $L_1$ is not 
simple, because any operator $A$ on $H$ 
has a non-trivial invariant subspace. 
If $H$ is countably infinite-dimensional, then 
we do not know 
whether the Hilbert representation    
$(H^A,f^A)$ of $L_1$ is not 
simple. In fact this is 
the invariant subspace problem, that  
is,  the question whether any operator $A$ on $H$ 
has a non-trivial (closed)  invariant subspace.

\bigskip
\noindent  
{\bf Definition.}
A Hilbert representation $(H,f)$ of a quiver $\Gamma$ is called 
 {\it transitive} 
if $End(H,f) = {\mathbb C}I$. It is clear that 
if a Hilbert representation $(H,f)$ is canonically simple, then 
$(H,f)$ is transitive. 
If a Hilbert representation $(H,f)$ of $\Gamma$ is 
transitive , then $(H,f)$ is indecomposable. In fact, since 
$End(H,f) = {\mathbb C}I$, any 
idempotent endomorphism $T$ is $0$ or $I$. In purely algebraic 
setting, a representation of a quiver is called a {\it brick} if 
its endomorphism ring is a division ring. But for a 
Hilbert representation $(H,f)$ of a quiver, $End(H,f)$ is a 
Banach algebra and not isomorphic to its purely algebraic 
endomorphism ring in general, because we only consider 
bounded endomorphisms.  By Gelfand-Mazur theorem,  
any Banach algebra over 
${\mathbb C}$ which is a division ring must be isomorphic to 
${\mathbb C}$. Therefore the reader may  use 
"brick" instead of "transitive Hilbert representation" if 
he does not confuse the difference between purely algebraic 
endomorphism ring and $End(H,f)$. 

\bigskip
\noindent  
{\bf Remark.} A lattice ${\mathcal L}$ of subspaces 
of a Hilbert space $H$ containing $0$ and $H$ is  called a 
transitive lattice if 
$$
\{A \in B(H) \ | \ AM \subset M 
\text{ for any } M \in {\mathcal L}\} = {\mathbb C}I.
$$
See, for example, Radjavi-Rosenthal \cite[4.7.]{RR}.

Let ${\mathcal L} = \{0,M_1,M_2, \dots, M_n, H\}$
be a finite  lattice.  Consider a $n$ subspace quiver $R_n=(V,E,s,r)$, 
that is, $V = \{1,2,\dots,n,n+1 \}$ and 
$E = \{\alpha_k \ | \ k = 1, \dots, n \}$ with $s(\alpha_k) = k$ 
and $r(\alpha_k) = n+1$ for $k = 1, \dots, n$. 
Then there exists a Hilbert representation $(K,f)$ of $R_n$ such that 
$K_k = M_k$, $K_{n+1} = H$ and $f_{\alpha_k} : M_k \rightarrow H$ is an 
inclusion for  $k = 1, \dots, n$. Then the lattice ${\mathcal L}$ is 
transitive  if and only if the corresponding Hilbert representation 
$(H,f)$ is transitive. 
This fact guarantees the terminology "transitive" 
in the above.

\bigskip
\noindent 
{\bf Example 3.}
 Let $L_2$ be 2-loop, that is,  $L_2$ is a quiver  
with one vertex $\{v\}$ and two loops
 $\{\alpha,\beta\}$. 
Let $S$ be the unilateral shift on $\ell^2(\mathbb N)$.  
Define a Hilbert representation $(H,f)$ of
$L_2$ by $H_{v} =  \ell^2(\mathbb N)$ and 
$f_{\alpha}=S, f_{\beta}=S^{*}$.  
Then $(H,f)$ is simple and transitive 
but  $(H,f)$ is not canonically simple. In fact, since 
$End(H,f)$ is given by 
the commutant $\{S,S^*\}'$ and $\{S,S^*\}' = {\mathbb C}I$,  
$(H,f)$ is transitive. Any subrepresentation of $(H,f)$ is given 
by the common invariant subspaces of $S$ and $S^*$, which is  
$0$ or $\ell^2(\mathbb N)$. 
Hence any subrepresentation of $(H,f)$ is $0$ or $(H,f)$.

\unitlength=0.004in 
\begin{picture}(230,101)(0,-101)
\special{sh 1.0}%
\special{ar 880 396 8 8 -6.28318531 0}%
\special{ar 736 400 140 104 -6.28318530717959 0}%
\special{ar 1012 392 128 108 -6.28318530717959 0}%
\special{pa 836 328}%
\special{pa 876 388}%
\special{fp}%
\special{pa 828 352}%
\special{pa 876 388}%
\special{pa 860 332}%
\special{fp}%
\special{pa 920 328}%
\special{pa 884 380}%
\special{fp}%
\special{pa 900 324}%
\special{pa 884 380}%
\special{pa 932 344}%
\special{fp}%
\put(175,-59.8888888888889){{\tiny S}}%
\put(253,-60.8888888888889){{\tiny$S^{*}$}}%
\put(212,-138.888888888889){{\tiny H}}%
\end{picture}

\bigskip
\noindent 
{\bf Example 4.}
Let $K_3 =(V,E,s,r)$ be a 3-Kronecker quiver, so that 
$V = \{1,2\}$, $E = \{\alpha_1,  \alpha_2, \alpha_3 \}$ and 
$s(\alpha_i) = 1, r(\alpha_i) = 2$ for $i = 1,2,3$. 
Let $S$ be the unilateral shift on $\ell^2(\mathbb N)$.
Define a Hilbert representation $(H,f)$ by 
$H_1 = H_2= \ell^2(\mathbb N)$ and $f_1 = S$, 
$f_2 = S^*$, $f_3 = I$. 
Then the Hilbert representation $(H,f)$ is simple but  not 
canonically simple.

\bigskip
\noindent 
{\bf Example 5.}
A bounded operator$A$  on a Hilbert space $H$ is called {\it unicellular } 
if the lattice of  invariant subspaces of $A$ is totally ordered.
See \cite{RR} for unicellular operators. 
 For 
example, the unilateral shift $S$ is unicellular. 
Any non-zero unicellular 
operator is strongly irreducible. 
Let $L_1$ be  one-loop quiver, so that $L_1$ has  one vertex $1$ and 
one arrow $\alpha : 1 \rightarrow 1$. 
Consider a 
Hilbert representation $(H^A,f^A)$ of $L_1$ such that $H^A_1 =H$ 
and $f^A_{\alpha} = A$. If $A$ is unicellular, then 
Hilbert representation $(H^A,f^A)$ of $L_1$ is indecomposable. 
 Let $L_2$ be 2-loop, that is,  $L_2$ is a quiver  
with one vertex $\{v\}$ and two loops
 $\{\alpha,\beta\}$.  
Define a Hilbert representation $(H,f)$ of
$L_2$ by $H_{v} =  H$ and 
$f_{\alpha}=A, f_{\beta}=A^{*}$.  
If $A$ is a unicellular operator, then $(H,f)$ is simple. In fact any 
subrepresentation $(K,g)$ is 
given by a common invariant subspace $M$  of $A$ and $A^*$. Then 
$M$ and $M^{\perp}$ is an invariant subspace of $A$. 
Since $A$ is unicellular, $M =0$ or $M=H$.

We summarize   relations among several  classes  of Hilbert
representations of quivers. 

\unitlength=0.004in 
\begin{picture}(740,293)(0,-293)
\put(103,-94.8888888888889){{\tiny canonically simple}}%
\special{pa 1528 416}%
\special{pa 2072 408}%
\special{fp}%
\special{pa 2016 428}%
\special{pa 2072 408}%
\special{pa 2016 392}%
\special{fp}%
\put(619,-108.888888888889){{\tiny simple
}}%
\put(114,-258.888888888889){{\tiny transitive}}%
\put(402,-254.888888888889){{\tiny indecomposable}}%
\put(803,-254.888888888889){{\tiny (irreducible)}}%
\special{pa 2640 1040}%
\special{pa 2960 1024}%
\special{fp}%
\special{pa 2904 1044}%
\special{pa 2960 1024}%
\special{pa 2904 1008}%
\special{fp}%
\special{pa 992 1040}%
\special{pa 1352 1032}%
\special{fp}%
\special{pa 1296 1052}%
\special{pa 1352 1032}%
\special{pa 1296 1016}%
\special{fp}%
\special{pa 1360 1100}%
\special{pa 988 1108}%
\special{fp}%
\special{pa 1044 1088}%
\special{pa 988 1108}%
\special{pa 1044 1124}%
\special{fp}%
\special{pa 1228 1072}%
\special{pa 1144 1172}%
\special{fp}%
\special{pa 2948 1112}%
\special{pa 2648 1120}%
\special{fp}%
\special{pa 2704 1100}%
\special{pa 2648 1120}%
\special{pa 2704 1136}%
\special{fp}%
\special{pa 2796 1084}%
\special{pa 2748 1160}%
\special{fp}%
\special{pa 2396 560}%
\special{pa 2084 864}%
\special{fp}%
\special{pa 2112 812}%
\special{pa 2084 864}%
\special{pa 2136 836}%
\special{fp}%
\special{pa 2180 872}%
\special{pa 2468 592}%
\special{fp}%
\special{pa 2440 644}%
\special{pa 2468 592}%
\special{pa 2416 620}%
\special{fp}%
\special{pa 2328 684}%
\special{pa 2364 780}%
\special{fp}%
\special{pa 2068 312}%
\special{pa 1528 316}%
\special{fp}%
\special{pa 1584 296}%
\special{pa 1528 316}%
\special{pa 1584 336}%
\special{fp}%
\special{pa 1888 264}%
\special{pa 1772 352}%
\special{fp}%
\special{pa 480 560}%
\special{pa 476 816}%
\special{fp}%
\special{pa 460 760}%
\special{pa 476 816}%
\special{pa 496 760}%
\special{fp}%
\special{pa 644 800}%
\special{pa 640 564}%
\special{fp}%
\special{pa 660 620}%
\special{pa 640 564}%
\special{pa 624 620}%
\special{fp}%
\special{pa 672 656}%
\special{pa 580 704}%
\special{fp}%
\special{pa 812 860}%
\special{pa 1956 572}%
\special{fp}%
\special{pa 1768 612}%
\special{pa 1948 572}%
\special{fp}%
\special{pa 1896 604}%
\special{pa 1948 572}%
\special{pa 1888 568}%
\special{fp}%
\special{pa 1376 640}%
\special{pa 1336 792}%
\special{fp}%
\end{picture}

\noindent
{\bf Example 6.} Consider 2-loop quiver $L_2$, 
that is,  $L_2$ is a quiver  
with one vertex $\{v\}$ and two loops $\{\alpha,\beta\}$. 
Define $H_v = {\mathbb C}^2$ and 
$$
f_{\alpha} = A := 
\begin{pmatrix}
1 & 0 \\ 
0 & 0 \\
\end{pmatrix}, 
\ \ \ 
f_{\beta} = B :=
\begin{pmatrix}

0 & 1 \\ 
0 & 0 \\ 
\end{pmatrix}.
$$
Then the Hilbert representation $(H,f)$ is transitive but is not 
simple. Since 
$$
End(H,f) = \{T \in M_2({\mathbb C}) \ | \ TA = AT \text{ and } 
TB = BT \} = {\mathbb C}I, 
$$
$(H,f)$ is transitive. Define $K_v = {\mathbb C} \oplus 0$ and 
$g_{\alpha} = id_{K_v}$ and $g_{\beta} = 0$. Then $(K,g)$ is 
a subrepresentation such that $(K,g) \not= 0$ and  
$(K,g)\not= (H,f)$. Therefore $(H,f)$ is not simple. 

\bigskip
\noindent
{\bf Example 7.} Consider 2-loop quiver $L_2$, 
that is,  $L_2$ is a quiver  
with one vertex $\{v\}$ and two loops $\{\alpha,\beta\}$. 
Define $H_v = {\mathbb C}^2$ and 
$$
f_{\alpha} = A := 
\begin{pmatrix}
1 & 0 \\ 
0 & 0 \\
\end{pmatrix}, 
\ \ \ 
f_{\beta} = C :=
\begin{pmatrix}

1 & 1 \\ 
1 & 1 \\ 
\end{pmatrix}.
$$
Then the Hilbert representation $(H,f)$ is transitive and  
simple. In fact since 
$$
End(H,f) = \{T \in M_2({\mathbb C}) \ | \ TA = AT \text{ and } 
TC = CT \} = {\mathbb C}I, 
$$
$(H,f)$ is transitive. Let $(K,g)$ be subrepresentation of $(H,f)$. 
Since $K_v$ is a common invariant subspace for $A$ and $C$, 
$K_v$ is $0$ or $H_v = {\mathbb C}^2$. Hence $(K,g) = 0$ or 
$(K,g) = (H,f)$. Thus $(H,v)$ is simple.  

We collect some elementary facts: 

\begin{prop}
Let $(H,f)$ be a finite-dimensional Hilbert
representation of a quiver $\Gamma$. 
If $(H,f)$ is simple, then $(H,f)$ is transitive.
\end{prop}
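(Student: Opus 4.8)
The plan is to prove this as a version of Schur's lemma, using the algebraic closedness of $\mathbb{C}$ together with finite-dimensionality to produce an eigenvalue. Since every linear map between finite-dimensional spaces is automatically bounded, in this setting $End(H,f)$ coincides with the purely algebraic endomorphism algebra, so I must show that each $T \in End(H,f)$ is a scalar multiple of the identity.

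The key observation I would establish first is that the kernel of an endomorphism is a subrepresentation. Given $T = (T_v)_{v \in V} \in End(H,f)$, set $K_v := \Ker T_v$. For any arrow $\alpha : x \to y$ and any $\xi \in K_x$, the endomorphism relation $T_y f_\alpha = f_\alpha T_x$ gives $T_y(f_\alpha \xi) = f_\alpha(T_x \xi) = 0$, so $f_\alpha(K_x) \subseteq K_y$; thus $K = (K_v)_{v \in V}$ with the restricted maps is a subrepresentation of $(H,f)$. (The same computation shows the image is a subrepresentation, though I will only need the kernel.)

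Now take an arbitrary $T \in End(H,f)$. Since $(H,f)$ is simple it is in particular nonzero, so I may choose a vertex $v_0$ with $H_{v_0} \neq 0$. Because $H_{v_0}$ is a nonzero finite-dimensional complex vector space, $T_{v_0}$ has an eigenvalue $\lambda \in \mathbb{C}$. Consider $S := T - \lambda I$; as $End(H,f)$ is an algebra containing $I$, we have $S \in End(H,f)$. By the previous step $\Ker S$ is a subrepresentation, and its $v_0$-component $\Ker(T_{v_0} - \lambda I)$ is the $\lambda$-eigenspace of $T_{v_0}$, hence nonzero. Simplicity then forces $\Ker S = (H,f)$, i.e. $\Ker(T_v - \lambda I) = H_v$ for every $v$, so $T_v = \lambda I$ for all $v$. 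Therefore $T = \lambda I$, and $End(H,f) = \mathbb{C}I$ as required.

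The argument uses finite-dimensionality essentially twice: to guarantee boundedness (so that the algebraic and Hilbert-space endomorphism algebras agree) and, crucially, to produce the eigenvalue $\lambda$. No such eigenvalue need exist in the infinite-dimensional case, which is precisely why the hypothesis is imposed, and why the converse fails (Example~6 is transitive but not simple). I expect no genuine obstacle beyond organizing these standard Schur-type steps; in particular connectedness of $\Gamma$ is not needed, since simplicity of the whole representation directly forces the single subrepresentation $\Ker S$ to be everything, propagating the common eigenvalue $\lambda$ to all vertices simultaneously.
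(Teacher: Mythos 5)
Your proof is correct, and it rests on exactly the same key fact as the paper's: for an endomorphism $T$ and a scalar $\lambda$, the family $K_v = \Ker(T_v - \lambda I_v)$ with the restricted maps is a subrepresentation, with finite-dimensionality supplying the eigenvalue. The difference is organizational, and your version is the cleaner one. The paper argues by contrapositive and must split into two cases: (A) some component $T_u$ is not a scalar, in which case an eigenspace of $T_u$ gives a proper non-trivial subrepresentation, and (B) every component is a scalar but the scalars $\lambda_1 \neq \lambda_2$ differ at two vertices with nonzero spaces, in which case $\Ker(T_v - \lambda_1 I)$ is full at $v_1$ and zero at $v_2$. Your direct Schur-type argument makes this case split evaporate: by fixing an eigenvalue $\lambda$ of $T_{v_0}$ at a single nonzero vertex and invoking simplicity once, you force $\Ker(T - \lambda I)$ to be the whole representation, which simultaneously handles the paper's case (B) (a mismatch of scalars across vertices is impossible because the one eigenvalue propagates everywhere) and case (A). What the paper's formulation buys in exchange is slightly more information in the contrapositive direction --- it exhibits explicitly how a non-scalar or vertex-inconsistent endomorphism produces a proper subrepresentation --- but as a proof of the stated proposition your route is shorter and equally rigorous.
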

\begin{proof}
Assume that $(H,f)$ is not transitive. 
Then the following two cases occur:\\
(A) There exists a vertex $u\in V$ and an endomorphism $T\in End(H,f)$ such
that $T_{u} \notin {\mathbb C}I_{u}$ .\\
(B) There exist vertices $v_{1}\ne v_{2}$ and 
scalars $\lambda_{1}\ne \lambda_{2}$ such that 
 $T_{v_{1}}=\lambda_{1}I_{v_{1}}$, $T_{v_{2}}=\lambda_{2}I_{v_{2}}$ 
with $H_{v_1} \not= 0$ and $H_{v_2} \not= 0$. \\
In either case we shall show that there exists a non-trivial 
subrepresentation. \\
The case (A): There exists a scalar $\lambda$ such that 
the eigenspace $0 \not= \Ker (T_{u}-\lambda I)\not= H_u$. 
For any vertex $v$ define $K_{v}=\Ker(T_{v}-\lambda I)$ . 
Then $f_{\alpha}(K_{s(\alpha)})\subset (K_{r(\alpha)})$, 
for any edge $\alpha \in E$. In fact, for $x \in K_{s(\alpha)}$, 
$$
T_{r(\alpha)}f_{\alpha}x = f_{\alpha}T_{s(\alpha)}x 
= f_{\alpha}\lambda x = \lambda f_{\alpha}x. 
$$
Let $g_{\alpha}$ be the restriction of $f_{\alpha}$ to $K_{s(\alpha)}$. 
Then $(K,g)$ is a non-trivial subrepresentation of $(H,f)$. \\
The case (B): Define a subrepresentation $(K,g)$ by 
$K_v = \Ker(T_{v}-\lambda_1 I)$ and 
$g_{\alpha}= f_{\alpha}|_{K_{s(\alpha)}}$. Since $K_{v_1} = H_{v_1}$ 
and $K_{v_2} = 0$. 
$(K,g)$ is a non-trivial subrepresentation of $(H,f)$.
\end{proof}

\begin{prop} Let $\Gamma$ be a finite quiver with no oriented cycles and 
$(H,f)$ a non-zero Hilbert representation of $\Gamma$. 
Then $(H,f)$ is simple if and only if $(H,f)$ is canonically simple. 
\end{prop}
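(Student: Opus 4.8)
The plan is to prove the nontrivial implication, since the converse (canonically simple $\Rightarrow$ simple) is already recorded in the text just after the definition. So I would assume $(H,f)$ is simple and aim to produce a single distinguished vertex carrying $\mathbb{C}$ while killing every operator. The central device is the \emph{subrepresentation generated by a subspace at a vertex}. Fixing a vertex $w$ and a closed subspace $U \subseteq H_w$, I would set, for each vertex $v$,
\[
K_v = \overline{\sum_{p \, : \, w \to v} f_p(U)},
\]
where the sum runs over all directed paths $p$ from $w$ to $v$ (the trivial path included when $v=w$) and $f_p = f_{\alpha_m} \circ \cdots \circ f_{\alpha_1}$ is the operator read along $p=(\alpha_1,\dots,\alpha_m)$. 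Since $\Gamma$ is finite and acyclic there are only finitely many such paths, so each $K_v$ is a well-defined closed subspace. Using that each $f_\alpha$ is bounded, hence continuous, and that precomposing a path $w \to x$ by an arrow $\alpha : x \to y$ yields a path $w \to y$, one checks $f_\alpha(K_x) \subseteq K_y$, so $(K,g)$ with $g_\alpha = f_\alpha|_{K_{s(\alpha)}}$ is a genuine subrepresentation. Here acyclicity first bites: the only path from $w$ back to $w$ is the trivial one, whence $K_w = U$.

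Next I would feed in simplicity. Whenever $U \neq 0$ the subrepresentation $(K,g)$ is non-zero, so simplicity forces $(K,g)=(H,f)$, and in particular $K_w = H_w$. Combined with $K_w = U$ this gives $U = H_w$ for \emph{every} nonzero closed subspace $U \subseteq H_w$; hence $\dim H_w \leq 1$ for each vertex $w$. Thus every vertex space is $0$ or $\mathbb{C}$, and since $(H,f) \neq 0$ at least one is $\mathbb{C}$.

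Then I would pin down uniqueness of the nonzero vertex. If $w \neq v$ were two vertices with $H_w, H_v \neq 0$, applying the above with $U = H_w$ gives $(K,g)=(H,f)$, so $H_v = K_v = \overline{\sum_{p : w \to v} f_p(H_w)} \neq 0$; in particular there is at least one directed path $w \to v$. By symmetry there is a directed path $v \to w$, and concatenating produces a cycle through $w$, contradicting acyclicity. Hence there is a unique vertex $v_0$ with $H_{v_0} = \mathbb{C}$ and $H_v = 0$ for $v \neq v_0$. Finally, for any arrow $\alpha : x \to y$, if $x \neq v_0$ then $H_x = 0$, while if $x = v_0$ then $y \neq v_0$ (a loop would be a cycle) so $H_y = 0$; either way $f_\alpha = 0$, and $(H,f)$ is canonically simple.

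The genuinely delicate point — the step I would treat most carefully — is the infinite-dimensional bookkeeping in the construction of $(K,g)$: one must take closures to stay within closed subspaces, and then verify that $f_\alpha\bigl(\overline{\sum_p f_p(U)}\bigr) \subseteq \overline{\sum_q f_q(U)}$ really holds, which rests on continuity of $f_\alpha$ together with the combinatorics of extending a path by one arrow. Everything else is a clean consequence of acyclicity, which I expect to use exactly twice: once to force $K_w = U$ (no feedback into the starting vertex), and once to rule out two-way reachability between distinct nonzero vertices.
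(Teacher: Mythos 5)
Your argument is correct, but it takes a genuinely different route from the paper's. The paper argues top-down: by finiteness and acyclicity there is a vertex $v_1$ with $H_{v_1}\neq 0$ all of whose outgoing arrows carry the zero map (take the last vertex with nonzero space in a topological order); then the family $L_{v_1}=\mathbb{C}y$ for a nonzero $y\in H_{v_1}$, $L_v=0$ elsewhere, with all maps zero, is a nonzero subrepresentation, and simplicity forces $(H,f)=(L,h)$, which is literally the canonically simple form --- the proof ends there. You argue bottom-up with the subrepresentation generated by a closed subspace $U$ at a vertex $w$, namely $K_v=\overline{\sum_{p\colon w\to v}f_p(U)}$, invoking simplicity twice: once to force $\dim H_w\leq 1$ at every vertex, and once to derive two-way reachability between any two vertices carrying nonzero spaces, which acyclicity forbids; the vanishing of the arrows then falls out for free. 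The paper's approach buys brevity --- one well-chosen one-dimensional subrepresentation settles everything --- although the existence of the vertex $v_1$ quietly relies on the same topological-order/acyclicity reasoning that you make explicit. Your approach costs more bookkeeping (closures, path combinatorics, continuity of the $f_\alpha$), all of which you handle correctly, but it buys a reusable tool: generated subrepresentations make sense for any Hilbert representation, they localize exactly where finiteness and acyclicity enter, and along the way you establish the sharper intermediate fact that a simple Hilbert representation of a finite acyclic quiver has every vertex space of dimension at most one. Your final identification of the one-dimensional space $H_{v_0}$ with $\mathbb{C}$ is the same implicit identification the paper's own proof makes, so there is no gap there.
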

\begin{proof}
It is trivial that if $(H,f)$ is canonically simple, then it is simple. 
Conversely assume that $(H,f)$ is simple. 
Since  $\Gamma$ is a quiver with no oriented cycles, there is a
sink  $v_{0}$ in $V$. First consider the case that $H_{v_0} \not= 0$. 
Choose a non-zero vector $x \in H_{v_0}$. 
Define a representation $(K,g)$ of $\Gamma$  
by  $K_{v}=\mathbb{C}x$ if $v={v_{0}}$ and 
$K_{v}=0$ if  $v \not= {v_{0}}$. Put  $g_{\alpha}=0$ 
for any ${\alpha} \in  E$. 
Since $v_{0}$ is a sink,  $(K,g)$ is a non-zero subrepresentation of $(H,f)$. 
Since $(H,f)$ is simple, $(H,f) = (K,g)$ . This means that $(H,f)$ is 
canonically simple.
Next consider the general case. Since 
$\Gamma$ is a finite quiver with no oriented cycles, 
there exists a vertex $v_1$ in $V$ such that $H_{v_1} \not= 0$ and 
$f_{\alpha}=0$ for any edge $\alpha \in E$ with $s(\alpha) = v_1$. 
Choose a non-zero vector $y \in H_{v_1}$.
Define a representation $(L,h)$ of $\Gamma$  
by  $L_{v}=\mathbb{C}y$ if $v={v_{1}}$ and 
$L_{v}=0$ if  $v \not= {v_{1}}$. Put  $h_{\alpha}=0$ 
for any ${\alpha} \in  E$. 
Then  $(L,h)$ is a non-zero subrepresentation of $(H,f)$. 
Since $(H,f)$ is simple, $(H,f) = (L,h)$ .
Therefore $(H,f)$ is canonically simple.
\end{proof}

\begin{prop}
Let $L_2$ be 2-loop, that is,  $L_2$ is a quiver  
with one vertex $\{v\}$ and two loops
 $\{\alpha,\beta\}$. 
Let $T$ be a bounded operator on an infinite dimensional 
Hilbert space $H$. Let $(H,f)$ be an infinite dimensional 
representation of $\Gamma$  such that
$H_{v} =H$ and $f_{\alpha}=T, f_{\beta}=T^{*}$. 
Then the following conditions are equivalent: 
\begin{enumerate}
\item $(H,f)$ is transitive . 
\item $(H,f)$ is simple.
\item $T$ is irreducible, that is, 
the commutant  $\{T,T^{*}\}^{\prime}= {\mathbb C}$.
\end{enumerate}
\end{prop}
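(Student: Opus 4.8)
The plan is to first identify the endomorphism ring concretely and then translate each of the three conditions into a statement about the operator $T$. Since $L_2$ has a single vertex, a homomorphism $S:(H,f)\to(H,f)$ is just one bounded operator $S$ on $H$, and the defining intertwining relations read $Sf_{\alpha}=f_{\alpha}S$ and $Sf_{\beta}=f_{\beta}S$, i.e. $ST=TS$ and $ST^{*}=T^{*}S$. Hence $End(H,f)=\{T,T^{*}\}'$. Because $(H,f)$ is transitive precisely when $End(H,f)=\mathbb{C}I$, the equivalence of (1) and (3) falls out of this identification with no further work.

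For the remaining equivalence I would reduce simplicity to the reducing-subspace structure of $T$. A subrepresentation of $(H,f)$ is a closed subspace $M\subset H$ with $f_{\alpha}M=TM\subset M$ and $f_{\beta}M=T^{*}M\subset M$, and a subspace invariant under both $T$ and $T^{*}$ is exactly a reducing subspace for $T$: for $x\in M^{\perp}$ and $y\in M$ one has $\langle Tx,y\rangle=\langle x,T^{*}y\rangle=0$, so $M^{\perp}$ is also $T$-invariant. Thus $(H,f)$ is simple if and only if the only reducing subspaces of $T$ are $0$ and $H$.

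I would then close the loop via (3)$\Rightarrow$(2) and (2)$\Rightarrow$(3). For (3)$\Rightarrow$(2): if $M$ is a non-trivial reducing subspace, its orthogonal projection $P_{M}$ commutes with $T$ and $T^{*}$, so $P_{M}\in\{T,T^{*}\}'=\mathbb{C}I$ forces $P_{M}\in\{0,I\}$, i.e. $M\in\{0,H\}$; hence $(H,f)$ is simple. For (2)$\Rightarrow$(3): the set $\{T,T^{*}\}'$ is the commutant of a self-adjoint family and is therefore a von Neumann algebra. If it were strictly larger than $\mathbb{C}I$, then taking $B\in\{T,T^{*}\}'\setminus\mathbb{C}I$ and passing to $B+B^{*}$ or $i(B-B^{*})$ yields a self-adjoint element of $\{T,T^{*}\}'$ that is not a scalar; the spectral theorem then supplies a non-trivial spectral projection $P\in\{T,T^{*}\}'$, whose range $PH$ is a non-trivial reducing subspace, contradicting simplicity. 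Hence $\{T,T^{*}\}'=\mathbb{C}I$, which is (3).

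The only step requiring more than a direct unwinding of definitions is the last implication (2)$\Rightarrow$(3): one must \emph{manufacture} a reducing subspace from the mere failure of $\{T,T^{*}\}'$ to be scalar, and this is exactly where the von Neumann-algebra structure of $\{T,T^{*}\}'$ is indispensable, since it guarantees (through the spectral theorem) that a non-scalar commutant contains a non-trivial projection. I expect this to be the main obstacle; everything else is bookkeeping, and the infinite-dimensionality of $H$ plays no essential role in the argument beyond fixing the class of representation considered.
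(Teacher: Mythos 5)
Your proposal is correct and follows essentially the same route as the paper: both identify $End(H,f)$ with the commutant $\{T,T^{*}\}'$ (giving the equivalence of (1) and (3)) and identify subrepresentations with closed subspaces invariant under $T$ and $T^{*}$, equivalently with projections commuting with $T$ and $T^{*}$. The only difference is that you spell out the step the paper leaves implicit in its final sentence --- namely that $\{T,T^{*}\}'$, being the commutant of a self-adjoint set and hence a von Neumann algebra, must equal $\mathbb{C}I$ once its only projections are $0$ and $I$, via passage to self-adjoint elements and the spectral theorem --- which is a worthwhile filling-in of detail rather than a different argument.
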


\begin{proof}
Note that $End(H,f) = \{ A \in B(H) \ | \ AT=TA,\ AT^*=T^*A\}$. 
Any subrepresentation is given by a subspace $M$ of $H$ such that 
$TM \subset M$ and $T^*M \subset M$. Let $P$ be the projection of $H$ 
onto $M$. Then this  means that $P$ commutes with $T$ and $T^*$. 
Therefore these three condition (1), (2) and (3) are equivalent.
\end{proof}

\section{Hilbert representations of the Kronecker quiver}

The Kronecker quiver $K$ is a quiver with two vertices $\{1,2\}$ and 
two paralleled arrows $\{\alpha, \beta\}$:
$$
K : 1 ^{\overset {\alpha}{\longrightarrow}}
_{\underset {\beta}{\longrightarrow}} 2
$$
A Hilbert representation $(H,f)$ of the Kronecker quiver is given 
by two Hilbert spaces $H_1$, $H_2$ and two bounded operators 
$f_{\alpha}, f_{\beta}: H_1 \rightarrow H_2$. 

The finite-dimensional indecomposable representations of 
the Kronecker quiver $K$ was  partially classified by Weierstrass 
and completed by Kronecker. 
Any finite-dimensional indecomposable representation is one of 
the following up to isomorphism: 
\begin{enumerate}
\item $H_1 = {\mathbb C}^n, H_2 = {\mathbb C}^n, 
f_{\alpha} = \lambda I_n + J_n, \text { (Jordan block) }, 
 \lambda \in {\mathbb C}, f_{\beta}= I_n , \ \ n \geq 1$.  
\item $H_1 = {\mathbb C}^n, H_2 = {\mathbb C}^n, 
f_{\alpha} = I_n, f_{\beta} = \lambda I_n + J_n, \text { (Jordan block) }, 
\lambda \in {\mathbb C},  \ \ n \geq 1$.
\item $H_1 = {\mathbb C}^{n+1}, H_2 = {\mathbb C}^{n}, 
f_{\alpha} = [I_n, 0], f_{\beta}= [0,I_n], 
n \geq 0$.  
\item $H_1 = {\mathbb C}^{n}, H_2 = {\mathbb C}^{n+1}, 
f_{\alpha} = 
\begin{bmatrix}
I_n \\
0
\end{bmatrix} 
, f_{\beta}=  
\begin{bmatrix}
0 \\
I_n
\end{bmatrix}, 
n \geq 0.
$ 
\end{enumerate}
Moreover the case (2) is reduced to (1) if $\lambda \not=0$. 
Among these cases, any representation $(H,f)$ in (3),(4), 
n=1 of (1) or n = 1 of (2) 
is transitive and $End (H,f) = {\mathbb C}I$. Therefore it is interesting to 
find an infinite-dimensional indecomposable Hilbert representation $(H,f)$
of the Kronecker quiver $K$ and one with $End (H,f) = {\mathbb C}I$. 

\bigskip
\noindent
{\bf Example 8}
Let $K$ be the Kronecker quiver.
Let $S$ be the unilateral shift on $\ell^2(\mathbb N)$.
Define a Hilbert representation $(H,f)$ of $K$ by 
$H_{1} = H_2=  \ell^2(\mathbb N)$ and 
$f_{\alpha}=I, f_{\beta}=\lambda I + S, 
\ \lambda \in {\mathbb C}$. Then  
the Hilbert representation $(H,f)$ of $K$ is 
indecomposable and is not transitive.

Similarly, define a Hilbert representation $(L,g)$ of $K$ by 
$L_{1} = L_2=  \ell^2(\mathbb N)$ and 
$g_{\alpha}=I, g_{\beta}=\lambda I + S^* , 
\ \lambda \in {\mathbb C} $. Then  
the Hilbert representation $(L,g)$ of $K$ is
indecomposable and is not transitive. 
These are infinite-dimensional analog of 
the case (1) and (2) of the finite-dimensional indecomposable 
representation of $K$. 

\bigskip
We can replace the unilateral shift by a strongly 
irreducible operators in general. The 
following proposition shows that strongly irreducible 
operators are important to study  Hilbert representations of 
quivers. 

\begin{prop}
Let $K$ be the Kronecker quiver.
Let $A$ be a bounded operator on a 
Hilbert space ${\mathcal H}$. Let 
$(H,f)$ be a Hilbert representation 
defined by one of the following forms:
\begin{enumerate}
\item $H_1 = {\mathcal H}, H_2 = {\mathcal H}, 
f_{\alpha} = A, f_{\beta}= I$. 
\item $H_1 = {\mathcal H}, H_2 = {\mathcal H},
f_{\alpha} = I, f_{\beta} = A$. , 
\end{enumerate}
If $A$ is strongly irreducible, 
then the Hilbert representation $(H,f)$ of 
the Kronecker quiver $K$ is indecomposable. 
The representation $(H,f)$ is not transitive 
unless $\dim {\mathcal H} = 1$. 
Conversely if the Hilbert representation $(H,f)$ 
is indecomposable, then the operator $A$ is 
strongly irreducible. 
\end{prop}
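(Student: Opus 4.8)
The plan is to reduce the whole statement to the commutant $\{A\}'$, exactly as in the one-loop theorem quoted above. First I would compute $End(H,f)$ explicitly. A homomorphism is a pair $T=(T_1,T_2)$ of bounded operators on $\mathcal{H}$ satisfying the two intertwining relations $T_2 f_\alpha = f_\alpha T_1$ and $T_2 f_\beta = f_\beta T_1$. In form (1) the arrow carrying $I$ forces $T_2 = T_1$, and then the arrow carrying $A$ gives $T_1 A = A T_1$; form (2) is identical with the roles of the two arrows interchanged. Hence in both cases $End(H,f) = \{(T,T) : T \in \{A\}'\}$, which is identified with the commutant $\{A\}'$ via $(T,T)\mapsto T$. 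This single computation is what everything else rests on.

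Next I would translate indecomposability into a statement about idempotents of $\{A\}'$. By Proposition \ref{prop:indecomposable-idempotent}, $(H,f)$ is indecomposable iff $Idem(H,f)=\{0,I\}$, and under the identification above this means the only idempotents in $\{A\}'$ are $0$ and $I_{\mathcal H}$. To connect this with strong irreducibility I would set up the standard bijection between non-trivial idempotents $P \in \{A\}'$ and non-trivial decompositions $\mathcal{H}=M \oplus N$ into closed $A$-invariant subspaces: given such $P$, put $M=\Im P$ and $N=\Ker P$, which are closed with $M\cap N=0$ and $M+N=\mathcal H$, while $PA=AP$ makes both invariant; conversely a decomposition into closed invariant $M,N$ yields the projection onto $M$ along $N$, bounded by the closed graph theorem, which commutes with $A$. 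Thus $A$ is strongly irreducible iff $\{A\}'$ has no non-trivial idempotent iff $(H,f)$ is indecomposable, and this gives both the asserted implication and its converse at once. The one genuinely topological point, and the place I would be most careful, is exactly this matching: that the algebraic splitting attached to an idempotent of $B(\mathcal H)$ is automatically a splitting into \emph{closed} subspaces, and that the projection attached to a pair of complementary closed invariant subspaces is bounded. Both facts are standard but must be invoked to align the commutant picture with the closed-subspace definition of strong irreducibility.

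Finally, for transitivity I would again use $End(H,f)\cong\{A\}'$. If $\dim\mathcal H>1$ there are two cases: if $A\notin\mathbb{C}I$ then $A$ itself lies in $\{A\}'\setminus\mathbb{C}I$, and if $A\in\mathbb{C}I$ then $\{A\}'=B(\mathcal H)\supsetneq\mathbb{C}I$; in either case $End(H,f)\neq\mathbb{C}I$, so $(H,f)$ is not transitive. When $\dim\mathcal H=1$ we have $\{A\}'=\mathbb{C}I$ and the representation is transitive, matching the cases $n=1$ in the Kronecker classification. This disposes of the remaining clause, so the only real work is the initial commutant computation together with the idempotent/invariant-subspace dictionary.
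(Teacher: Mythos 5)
Your proposal is correct and follows essentially the same route as the paper: both identify $End(H,f)$ with the commutant $\{A\}'$ (the arrow carrying $I$ forces $T_1=T_2$), and then reduce indecomposability to the absence of non-trivial idempotents in $\{A\}'$ via Proposition \ref{prop:indecomposable-idempotent}. The only differences are cosmetic: you spell out the standard dictionary between idempotents in $\{A\}'$ and decompositions of $\mathcal{H}$ into closed $A$-invariant subspaces, which the paper invokes implicitly as the meaning of strong irreducibility, and your transitivity argument (case split on whether $A$ is scalar) even yields the unconditional statement, whereas the paper deduces non-transitivity from the fact that a strongly irreducible operator on a space of dimension at least $2$ cannot be scalar.
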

\begin{proof}Assume that $A$ is strongly irreducible. 
Let $T = (T_1,T_2)$ be in $Idem (H,f)$. Then  
$T_2I = I T_1$ and $T_2A = AT_1$ in either case. 
Thus $T_1A = AT_1$ and $T_1$ is an idempotent. 
Since $A$ is strongly irreducible, $T_1 = 0$ or $T_1 = I$. 
Hence $T = 0$ or $T = I$, because $T_1 = T_2$. Therefore 
$(H,f)$ is indecomposable. 
Furthermore suppose that $\dim {\mathcal H} \not= 1$.
Since $A$ is strongly irreducible, $A$ is not a scalar operator. 
Then $(A,A)$ is in $End (H,f)$  and is not a scalar. 
Therefore $(H,f)$ is not transitive. 
Conversely, assume that $(H,f)$ is indecomposable. 
Let $ Q  \in {\mathcal H}$ be an idempotent 
operator such that  $QA = AQ$. 
Then  $T = (Q,Q)$ is in  $End (H,f)$. Since 
$(H,f)$ is indecomposable, $Q = 0 $ or $Q = I$. Hence 
$A$ is strongly irreducible.  

\end{proof}

\begin{lemma}
Let $K$ be the Kronecker quiver.
Let $A$  and $B$ be bounded operators on a 
Hilbert space ${\mathcal H}$. Let 
$(H,f)$ be a Hilbert representation 
defined by $H_1 = {\mathcal H}, H_2 = {\mathcal H}, 
f_{\alpha} = A$ and $f_{\beta}= B$. 
If $A$ is invertible, then 
there exists a bounded operator $C$ on ${\mathcal H}$, 
such that $(H,f)$ is 
isomorphic to a Hilbert representation $(L,g)$ 
with  $L_1 = {\mathcal H}, L_2 = {\mathcal H}, 
g_{\alpha} = I$ and $g_{\beta}= C$. 
Conversely 
if $(H,f)$ is isomorphic to a Hilbert representation $(L,g)$ 
with $g_{\alpha} = I$ and $g_{\beta}= C$, then $A$ is invertible. 
\end{lemma}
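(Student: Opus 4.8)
The plan is to work directly from the definition of isomorphism in $HRep(K)$: an isomorphism $\varphi : (H,f) \to (L,g)$ is a pair $(\varphi_1, \varphi_2)$ of invertible operators in $B(\mathcal H)$ satisfying the two intertwining relations $\varphi_2 A = g_\alpha \varphi_1$ and $\varphi_2 B = g_\beta \varphi_1$ coming from the arrows $\alpha$ and $\beta$ (recall $s(\alpha)=s(\beta)=1$ and $r(\alpha)=r(\beta)=2$). Since we are free to prescribe $g_\alpha = I$, the first relation becomes $\varphi_2 A = \varphi_1$, and the whole point is to use this to normalize $f_\alpha$ to the identity.

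For the forward implication, assume $A$ is invertible. The simplest choice is $\varphi_2 = I$ and $\varphi_1 = A$; both are invertible precisely because $A$ is. This choice forces, through the $\beta$-relation $\varphi_2 B = C \varphi_1$, the identity $B = CA$, so I would define $C := BA^{-1}$. It then remains to check that $(\varphi_1,\varphi_2)=(A,I)$ is indeed an isomorphism onto the representation $(L,g)$ with $g_\alpha = I$ and $g_\beta = C$: the $\alpha$-relation reads $I\cdot A = I\cdot A$ and the $\beta$-relation reads $I\cdot B = (BA^{-1})A = B$, both of which hold. This is a routine verification.

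For the converse, suppose $(H,f)\cong(L,g)$ with $g_\alpha = I$ and $g_\beta = C$, via invertible $\varphi_1,\varphi_2$. The only relation I need is the one attached to $\alpha$, namely $\varphi_2 A = g_\alpha \varphi_1 = \varphi_1$. Solving for $A$ gives $A = \varphi_2^{-1}\varphi_1$, a composition of two invertible operators, hence invertible.

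There is essentially no serious obstacle here; the single idea driving both directions is that an isomorphism can absorb the invertible operator $A$, so that the representation up to isomorphism depends only on the relative datum $BA^{-1}$. The one point to be attentive to is that invertibility of $A$ in the converse comes specifically from the $\alpha$-arrow, where $g_\alpha = I$; the $\beta$-relation alone would not yield it, since $C$ need not be invertible.
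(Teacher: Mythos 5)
Your proof is correct and takes essentially the same approach as the paper: normalize $f_\alpha$ to the identity via the isomorphism $(\varphi_1,\varphi_2)=(A,I)$, and in the converse extract invertibility of $A$ from the $\alpha$-relation $\varphi_2 A=\varphi_1$ (the paper simply says ``the converse is clear''). One small point in your favor: your formula $C=BA^{-1}$ is the one actually compatible with the intertwiner $(A,I)$, whereas the paper pairs $(A,I)$ with $C=A^{-1}B$ --- a harmless slip, since $C=A^{-1}B$ would instead require the isomorphism $(I,A^{-1})$.
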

\begin{proof} Put $C = A^{-1}B$. Then 
$T = (T_1,T_2) := (A,I)$ gives a desired  isomorphism 
of $(H,f)$ onto $(L,g)$. The converse is clear. 
\end{proof}

\begin{prop}
Let $K$ be the Kronecker quiver.
Let $A$  and $B$ be bounded operators on a 
Hilbert space ${\mathcal H}$. Let 
$(H,f)$ be a Hilbert representation 
defined by $H_1 = {\mathcal H}, H_2 = {\mathcal H}, 
f_{\alpha} = A$ and $f_{\beta}= B$. 
If $xA+yB$ is invertible for some scalars $x$ and $y\not=0$, then 
there exists a bounded operator $T$ on ${\mathcal H}$,  
scalars $\lambda _0 \not= 0$ and $\lambda _1$
such that $(H,f)$ is 
isomorphic to a Hilbert representation $(L,g)$ 
with  $L_1 = {\mathcal H}, L_2 = {\mathcal H}, 
g_{\alpha} = T$ and $g_{\beta}= \lambda _0 I + \lambda _1T $. 
\end{prop}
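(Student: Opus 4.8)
The plan is to produce the representation $(L,g)$ together with an explicit isomorphism $\varphi=(\varphi_1,\varphi_2):(H,f)\to(L,g)$ by hand, in the spirit of the preceding Lemma. The difference from that Lemma is that here neither $A$ nor $B$ need be invertible individually; only a combination $xA+yB$ is assumed invertible. So instead of normalizing $f_\alpha$ to $I$, I would use the invertible combination to pin down the intertwiner $\varphi_1$, and leave $g_\alpha=T$ unnormalized.

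First I would record the consequence of the hypothesis: since $D:=xA+yB$ is invertible, $D^{-1}$ is bounded by the bounded inverse theorem, and (because $y\neq 0$) I may solve for $B$ in the form $B=\tfrac{1}{y}D-\tfrac{x}{y}A$. Next I would reverse-engineer $\varphi$. The homomorphism conditions for the two arrows read $\varphi_2 A=T\varphi_1$ and $\varphi_2 B=(\lambda_0 I+\lambda_1 T)\varphi_1$; eliminating $T$ from the second via the first gives $\varphi_2(B-\lambda_1 A)=\lambda_0\varphi_1$. I would now simply set $\varphi_2:=I$ and $\varphi_1:=D=xA+yB$; matching coefficients of $A$ and $B$ in $B-\lambda_1 A=\lambda_0(xA+yB)$ forces $\lambda_0:=1/y\neq 0$ and $\lambda_1:=-x/y$, and the first condition then defines $T:=A D^{-1}$, which is bounded.

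Finally I would verify that $\varphi=(D,I)$ is an isomorphism onto the representation $(L,g)$ with $g_\alpha=T=AD^{-1}$ and $g_\beta=\lambda_0 I+\lambda_1 T$: the $\alpha$-relation $I\cdot A=(AD^{-1})D$ is immediate, and the $\beta$-relation $I\cdot B=(\lambda_0 I+\lambda_1 AD^{-1})D=\lambda_0 D+\lambda_1 A$ reduces to $B=\tfrac{1}{y}D-\tfrac{x}{y}A$, which is exactly the first step. Since $D$ and $I$ are invertible, $\varphi$ is an isomorphism. There is no genuine analytic obstacle; the entire content is the algebraic bookkeeping of extracting $\lambda_0,\lambda_1$ from the invertible combination and realizing that $\varphi_2=I$, $\varphi_1=xA+yB$ already does the job. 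The only point to check with any care is that $D^{-1}$, hence $T=AD^{-1}$, is bounded, which is automatic.
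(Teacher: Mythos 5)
Your proof is correct and is essentially the paper's own argument in mirror image: the paper sets $T=(xA+yB)^{-1}A$ and uses the isomorphism $(T_1,T_2)=(I,(xA+yB)^{-1})$, whereas you set $T=A(xA+yB)^{-1}$ and use $(\varphi_1,\varphi_2)=(xA+yB,\,I)$. In both cases the entire content is the identity $B=\tfrac{1}{y}(xA+yB)-\tfrac{x}{y}A$, which yields $\lambda_0=1/y\neq 0$ and $\lambda_1=-x/y$, so the two arguments are the same up to the choice of which vertex carries the invertible intertwiner.
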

\begin{proof} Put $T = (xA + yB)^{-1}A$. Then 
$$
(xA + yB)^{-1}B = (xA + yB)^{-1}(\frac{1}{y}(xA + yB)-\frac{x}{y}A) 
= \frac{1}{y}I - \frac{x}{y}T. 
$$
Therefore 
$T = (T_1,T_2) := (I,(xA + yB)^{-1})$ gives a desired  isomorphism 
of $(H,f)$ onto $(L,g)$. 
\end{proof}

We shall show that strongly irreducible operators are useful to 
construct indecomposable 
Hilbert representations of $n$-Kronecker quivers. 

\begin{thm}
Let $K_{n+1}$ be the $(n+1)$-Kronecker quiver, that is, 
$K_{n+1}$ is a quiver with two vertex $\{1, 2 \}$ and 
$n+1$ edges $\{\alpha_0, \dots, \alpha_{n} \}$ such that 
$s(\alpha_k) = 1$ and $r(\alpha_k) = 2$ for $k = 0,\dots, n$. 
Let $T$ be a bounded operator on a Hilbert space H. 
Define a Hilbert representation $(M,f) = (M_T,f_T)$ of $K_{n+1}$ by 
$M_1 = M_2 = H$ and 
$f_{\alpha_0} = \sum_{k=0}^n \lambda_kT^k$ for some 
scalars $\lambda_0 \not= 0, \lambda_1,\dots, \lambda_n$ and     
$f_{\alpha_k} = T^k$ for $k = 1,\dots,n$. Then 
the Hilbert representation $(M,f)$ is indecomposable if 
and only if $T$ is strongly irreducible. 
Moreover let $S$ be another bounded operator. 
Then the corresponding Hilbert representations 
$(M_T,f_T)$ and $(M_S,f_S)$ are isomorphic if and only if 
$T$ and $S$ are similar. 

\end{thm}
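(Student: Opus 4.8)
The plan is to mimic, and slightly strengthen, the argument already used for the two-arrow Kronecker quiver (where the relation coming from the identity arrow forced the two components of an endomorphism to coincide); here there is no identity arrow, but the distinguished arrow $\alpha_0$ together with $\lambda_0 \neq 0$ will play the same role. First I would take a homomorphism $\Phi = (\Phi_1, \Phi_2) : (M_T, f_T) \to (M_S, f_S)$ and write out the intertwining relations: $\Phi_2 T^k = S^k \Phi_1$ for $k = 1, \dots, n$, together with $\Phi_2 \sum_{k=0}^n \lambda_k T^k = \sum_{k=0}^n \lambda_k S^k \Phi_1$ from $\alpha_0$. The key observation is that in the $\alpha_0$ relation every summand with $k \geq 1$ already cancels by virtue of the relations from $\alpha_1, \dots, \alpha_n$, leaving only $\lambda_0 \Phi_2 = \lambda_0 \Phi_1$; since $\lambda_0 \neq 0$, this forces $\Phi_1 = \Phi_2$. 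This cancellation is the one genuinely load-bearing step and is exactly where the hypothesis $\lambda_0 \neq 0$ enters.

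Once $\Phi_1 = \Phi_2 =: \Phi$, all the relations collapse to the single equation $\Phi T = S \Phi$ (the relation from $\alpha_1$), and conversely any $\Phi$ with $\Phi T = S \Phi$ satisfies every relation automatically, since it then intertwines all powers. Setting $S = T$ yields an algebra isomorphism $End(M_T, f_T) \cong \{T\}'$ via $(\Phi, \Phi) \mapsto \Phi$, which carries idempotents to idempotents. I would then invoke Proposition \ref{prop:indecomposable-idempotent}: $(M_T, f_T)$ is indecomposable if and only if $Idem(M_T, f_T) = \{0, I\}$, which under this identification says precisely that $\{T\}'$ has no idempotent other than $0$ and $I$. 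A non-trivial idempotent $P \in \{T\}'$ gives invariant subspaces $\Im P$ and $\Ker P$ with $\Im P \cap \Ker P = 0$ and $\Im P + \Ker P = H$, and conversely such a splitting yields a non-trivial idempotent in $\{T\}'$; hence the condition is exactly strong irreducibility of $T$.

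For the classification up to isomorphism the same reduction applies with $\Phi$ now invertible: an isomorphism $(M_T, f_T) \cong (M_S, f_S)$ is precisely an invertible $\Phi$ with $\Phi T = S \Phi$, that is $S = \Phi T \Phi^{-1}$, which is the definition of $T$ and $S$ being similar. The converse is immediate, as a similarity $S = \Phi T \Phi^{-1}$ gives $\Phi T^k = S^k \Phi$ for all $k$ and hence intertwines $f_{\alpha_0}$ as well, so $(\Phi, \Phi)$ is an isomorphism. I anticipate no analytic subtlety: boundedness of $\Phi$ and of $\Phi^{-1}$ is built into the definitions of homomorphism and isomorphism of Hilbert representations, so no closed-graph or completeness argument is required, and the entire proof is formal manipulation of the intertwining relations apart from the single cancellation step highlighted above.
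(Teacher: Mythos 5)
Your proposal is correct and follows essentially the same route as the paper: the load-bearing step in both is using the relations from $\alpha_1,\dots,\alpha_n$ to cancel the higher-order terms in the $\alpha_0$ relation, leaving $\lambda_0\Phi_2=\lambda_0\Phi_1$, after which everything reduces to the intertwining condition $\Phi T = S\Phi$ and the standard correspondence between idempotents in $\{T\}'$ and strong irreducibility. The only difference is organizational: you compute $Hom((M_T,f_T),(M_S,f_S))$ for general $S$ in one pass, covering both indecomposability and the similarity classification, whereas the paper computes $End(M_T,f_T)$ first and notes that the latter half is ``similarly proved.''
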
 
\begin{proof}
We shall show that 
$$
End(M,f) = \{ (A,A) \in B(H)^2 \ | \ AT = TA \}
$$
In fact, let $(A,B)  \in B(H)^2$ be in $End(M,f)$. 
Then $BT^k = T^kA$ for  $k = 1,\dots,n$ and 
 $ B(\sum_{k=0}^n \lambda_kT^k) = ( \sum_{k=0}^n \lambda_kT^k)A$. 
Hence $\lambda_0 A =  \lambda_0 B$. Since 
$\lambda_0 \not= 0$, we have $A = B$. Then $AT = TA$. The converse is 
clear. Therefore 
$$
Idem (M,f) = \{ (A,A) \in B(H)^2 \ | \ AT = TA,\ \  A^2 = A  \} = \{0,I\}
$$ 
if and only if $T$ is strongly irreducible. The latter half 
of the statement  is similarly proved. 
\end{proof}

\begin{thm}
Let $K$ be the Kronecker quiver. 
Then there exist infinitely many 
infinite-dimensional indecomposable 
Hilbert representations of $K$ 
which are relatively prime each other.  
\end{thm}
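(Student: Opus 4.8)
The plan is to take the representations already appearing in Example~8 and let the scalar parameter vary. Fix the unilateral shift $S$ on $\ell^2(\mathbb{N})$, and for each $\lambda \in \mathbb{C}$ let $(H^\lambda, f^\lambda)$ be the Hilbert representation of $K$ with $H_1^\lambda = H_2^\lambda = \ell^2(\mathbb{N})$, $f_\alpha^\lambda = I$ and $f_\beta^\lambda = \lambda I + S$. Each $(H^\lambda, f^\lambda)$ is infinite-dimensional, and it is indecomposable by the Proposition above (strongly irreducible $A$ yields an indecomposable representation of $K$ with $f_\alpha = I$, $f_\beta = A$), since $\lambda I + S$ is strongly irreducible: its commutant coincides with $\{S\}'$ and $S$ is strongly irreducible. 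It therefore suffices to fix an infinite set $\Lambda \subset \mathbb{C}$ of distinct scalars (say $\Lambda = \mathbb{N}$) and to prove that $(H^\lambda, f^\lambda)$ and $(H^\mu, f^\mu)$ are relatively prime whenever $\lambda \neq \mu$.

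First I would reduce relative primeness to an operator equation. If $T = (T_1, T_2)$ is a homomorphism from $(H^\lambda, f^\lambda)$ to $(H^\mu, f^\mu)$, the intertwining relation for $\alpha$ gives $T_2 = T_1$, and, writing $A := T_1 = T_2$, the relation for $\beta$ becomes $A(\lambda I + S) = (\mu I + S)A$, i.e.
\[ AS - SA = c A, \qquad c := \mu - \lambda \neq 0. \]
Thus $\mathrm{Hom}((H^\lambda,f^\lambda),(H^\mu,f^\mu))$ is exactly the space of bounded $A$ solving this commutation relation, and the whole theorem comes down to showing that $AS - SA = cA$ with $c \neq 0$ forces $A = 0$. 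I expect this to be the main obstacle: a crude spectral-separation argument does not apply, because $\sigma(S)$ and $\sigma(S + cI)$ overlap when $|c| < 2$.

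To dispose of the commutation relation I would pass to the adjoint. Rewriting $AS = (S + cI)A$ and taking adjoints yields $S^* B - B S^* = \bar c B$ with $B := A^*$. For $|w| < 1$ let $k_w = \sum_{n \geq 0} w^n e_n \in \ell^2(\mathbb{N})$, so that $S^* k_w = w k_w$. Applying the relation to $k_w$ gives $S^*(B k_w) = (w + \bar c)(B k_w)$, so $B k_w$ is either $0$ or an eigenvector of $S^*$ for the eigenvalue $w + \bar c$. Since the point spectrum of $S^*$ is precisely the open unit disc, $B k_w = 0$ for every $w$ in the non-empty open set $U = \{\, w : |w| < 1, \ |w + \bar c| > 1 \,\}$. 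The family $\{ k_w : w \in U \}$ is total in $\ell^2(\mathbb{N})$ — if $x \perp k_w$ for all $w \in U$, then the holomorphic function $h(z) = \sum_n x_n z^n$ satisfies $h(\bar w) = 0$ for all $w \in U$, so $h$ vanishes on a non-empty open set and hence identically, giving $x = 0$ — whence the bounded operator $B$ vanishes on a dense subspace and $B = 0$, i.e. $A = 0$.

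This gives $\mathrm{Hom}((H^\lambda,f^\lambda),(H^\mu,f^\mu)) = 0$; interchanging $\lambda$ and $\mu$ merely replaces $c$ by $-c \neq 0$, so the same computation yields $\mathrm{Hom}((H^\mu,f^\mu),(H^\lambda,f^\lambda)) = 0$, and the two representations are relatively prime. As $\Lambda$ is infinite, the family $\{(H^\lambda, f^\lambda)\}_{\lambda \in \Lambda}$ consists of infinitely many infinite-dimensional indecomposable Hilbert representations of $K$ that are pairwise relatively prime, which is what the theorem asserts.
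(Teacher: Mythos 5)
Your proposal is correct, but the way you kill the intertwiner is genuinely different from the paper's argument, and it proves more. The paper makes the same reduction ($T_1 = T_2 =: A$ and $(\lambda-\mu)A = SA - AS$), but then finishes with a one-line norm estimate: if $A \neq 0$ and $|\lambda - \mu| > 2$, then $\|(\lambda-\mu)A\| > 2\|A\| \geq \|SA - AS\|$, a contradiction. Consequently the paper only obtains relative primeness for pairs with $|\lambda - \mu| > 2$, and gets ``infinitely many'' by choosing scalars spaced more than $2$ apart. Your argument --- passing to $B = A^*$, feeding in the eigenvectors $k_w$ of $S^*$, using that the point spectrum of $S^*$ is exactly the open disc to force $Bk_w = 0$ on the open set $U = \{w : |w|<1,\ |w+\bar c|>1\}$, and then invoking the identity theorem to see that $\{k_w : w \in U\}$ is total --- handles every $c = \mu - \lambda \neq 0$, including the range $0 < |\lambda-\mu| \leq 2$ where both the paper's norm trick and the Rosenblum-type spectral-separation argument fail (as you correctly note, the spectra overlap there). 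So you get that \emph{all} the $(H^\lambda, f^\lambda)$, $\lambda \in \mathbb{C}$, are pairwise relatively prime, hence continuum many of them; this simultaneously strengthens the theorem and subsumes the paper's subsequent proposition, which only asserts that uncountably many of these representations are pairwise non-isomorphic (via non-similarity of $\lambda I + S$ and $\mu I + S$). What the paper's route buys in exchange is brevity and complete elementarity: no spectral theory or function theory, just the commutator bound $\|SA - AS\| \leq 2\|A\|$. Each step of your argument checks out: the set $U$ is nonempty and open for every $c \neq 0$, vanishing of a bounded operator on a total set gives vanishing everywhere, and the symmetry $c \mapsto -c$ handles the reverse Hom space.
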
 
\begin{proof}
Let $S$ be the unilateral shift on $\ell^2(\mathbb N)$. 
For each $\lambda \in {\mathbb C}$ 
define an indecomposable Hilbert representation 
$(H^{\lambda},f^{\lambda})$ of $K$ by 
$H^{\lambda}_{1} = H^{\lambda}_2=  \ell^2(\mathbb N)$ and 
$f^{\lambda}_{\alpha}=I, f^{\lambda}_{\beta}=\lambda I + S$. 
For $\lambda,\mu  \in {\mathbb C}$, if 
$|\lambda - \mu| > 2$, then we shall show that 
$(H^{\lambda},f^{\lambda})$ and 
$(H^{\mu},f^{\mu})$ are relatively prime each other. 
On the contrary, assume that 
$Hom((H^{\lambda},f^{\lambda}),(H^{\mu},f^{\mu} )) \not= 0$. 
Then there exists a non-zero 
$T = (T_1, T_2) \in Hom((H^{\lambda},f^{\lambda}),(H^{\mu},f^{\mu} ))$. 
Then 
$$
T_2 I = I T_1, \  \ \ T_2(\lambda I + S) = ( \mu I + S)T_1.  
$$
Hence $T_1 = T_2$ and $(\lambda - \mu)T_1 = ST_1 -T_1S$. 
Then
$$
 2\|T_1\| < \|(\lambda - \mu)T_1\| = \|ST_1 -T_1S\| \leq 2\|T_1\|. 
$$
This is a contradiction. 
Hence $Hom((H^{\lambda},f^{\lambda}),(H^{\mu},f^{\mu} )) = 0$. 
Similarly we have 
$Hom((H^{\mu},f^{\mu} ), (H^{\lambda},f^{\lambda})) = 0$. 
\end{proof}

Since relatively prime Hilbert representations 
are not isomorphic, it is clear that 
there exist infinitely  many non-isomorphic 
infinite-dimensional indecomposable 
Hilbert representations of $K$. 
We can easily say more.

\begin{prop}
Let $K$ be the Kronecker quiver. 
Then there exist uncountably  many non-isomorphic 
infinite-dimensional 
indecomposable Hilbert representations $(H,f)$ of $K$.
\end{prop}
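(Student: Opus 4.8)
The plan is to reuse the one-parameter family $(H^{\lambda},f^{\lambda})_{\lambda\in\mathbb{C}}$ from the preceding theorem, where $H^{\lambda}_1=H^{\lambda}_2=\ell^2(\mathbb{N})$, $f^{\lambda}_{\alpha}=I$ and $f^{\lambda}_{\beta}=\lambda I+S$ with $S$ the unilateral shift. Each member of this family is infinite-dimensional and indecomposable: this is exactly what Example 8 records, and it also follows from the proposition characterizing indecomposability of a representation with $f_{\alpha}=I$, $f_{\beta}=A$, because $\lambda I+S$ has the same invariant subspace lattice as $S$ and is therefore strongly irreducible. Thus the only task left is to show that the assignment $\lambda\mapsto(H^{\lambda},f^{\lambda})$ realizes uncountably many isomorphism classes; I will in fact show it is injective on isomorphism classes, which yields a full continuum.

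First I would identify the isomorphism relation inside this family. If $\varphi=(\varphi_1,\varphi_2)$ is an isomorphism from $(H^{\lambda},f^{\lambda})$ to $(H^{\mu},f^{\mu})$, the intertwining relation for the arrow $\alpha$ reads $\varphi_2 I=I\varphi_1$, forcing $\varphi_1=\varphi_2=:\varphi$, while the relation for $\beta$ reads $\varphi(\lambda I+S)=(\mu I+S)\varphi$. Hence $(H^{\lambda},f^{\lambda})\cong(H^{\mu},f^{\mu})$ if and only if the operators $\lambda I+S$ and $\mu I+S$ are similar.

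The decisive step is to separate these operators by a similarity invariant, namely the spectrum. Since $\sigma(S)=\{z\in\mathbb{C}:|z|\le 1\}$ is the closed unit disk, we have
\[
\sigma(\lambda I+S)=\lambda+\sigma(S)=\{z\in\mathbb{C}:|z-\lambda|\le 1\},
\]
the closed disk of radius one centered at $\lambda$. The center of a disk is determined by the disk, so distinct values of $\lambda$ give distinct spectra. As similar operators have equal spectra, $\lambda\ne\mu$ implies that $\lambda I+S$ and $\mu I+S$ are not similar, hence $(H^{\lambda},f^{\lambda})$ and $(H^{\mu},f^{\mu})$ are not isomorphic. Therefore $\{(H^{\lambda},f^{\lambda}):\lambda\in\mathbb{C}\}$ is a family of pairwise non-isomorphic infinite-dimensional indecomposable Hilbert representations of $K$ indexed by $\mathbb{C}$, which is uncountable.

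I do not expect a genuine obstacle here; the one point worth flagging is that the relatively-prime argument of the previous theorem is too weak for this stronger conclusion, since it only separates parameters with $|\lambda-\mu|>2$, and any subset of $\mathbb{C}$ that is $2$-separated is automatically countable. Replacing the relative-primeness estimate by the spectral invariant is precisely what upgrades \emph{infinitely many} to \emph{uncountably many}, and it does so without any new computation.
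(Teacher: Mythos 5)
Your proof is correct and takes essentially the same route as the paper: the paper's own proof consists of the single assertion that $\lambda I + S$ and $\mu I + S$ are not similar for $\lambda \neq \mu$, whence the representations $(H^{\lambda},f^{\lambda})$ and $(H^{\mu},f^{\mu})$ are not isomorphic. Your argument via the spectrum, $\sigma(\lambda I+S)=\{z : |z-\lambda|\le 1\}$ being a similarity invariant that pins down $\lambda$, simply supplies the justification for that non-similarity which the paper leaves implicit, along with the (also correct) reduction of isomorphism to similarity forced by $\varphi_2 I = I\varphi_1$.
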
 
\begin{proof}
Since $\lambda I + S$ and $\mu I + S$ are not similar 
for $\lambda \not= \mu \in {\mathbb C}$,  
the Hilbert representations 
$(H^{\lambda},f^{\lambda})$ and 
$(H^{\mu},f^{\mu})$ of the Kronecker quiver $K$ are 
not isomorphic. 
\end{proof}

\bigskip
\noindent
{\bf Example 9}
Let $K$ be the Kronecker quiver.
Let $S$ be the unilateral shift on $\ell^2(\mathbb N)$. 
Define a Hilbert representation $(H,f)$ of $K$ by 
$H_{1} = H_2=  \ell^2(\mathbb N)$ and 
$f_{\alpha}=S, f_{\beta}=S^{*}$. Then  
the Hilbert representation $(H,f)$ of $K$ is 
not indecomposable, so that $(H,f)$ of $K$ is not simple. In fact, 
let $\{e_n \ | \ n \in \mathbb N \}$ be a canonical basis 
of $\ell^2(\mathbb N)$. Define 
$L_1 = \{e_n \ | \ n \in \mathbb N  \text{ is odd } \}$ and 
$L_2 = \{e_n \ | \ n \in \mathbb N  \text{ is even } \}$. 
Consider the restrictions $g_{\alpha}=S|_{L_1}$ and 
$g_{\beta}=S^*|_{L_1}$. Then $(L,g)$ is a 
non-trivial subrepresentation of $(H,f)$. 
Similarly define 
$M_1 = \{e_n \ | \ n \in \mathbb N  \text{ is even } \}$ and 
$M_2 = \{e_n \ | \ n \in \mathbb N  \text{ is odd } \}$. 
Consider the restrictions $h_{\alpha}=S|_{M_1}$ and 
$h_{\beta}=S^*|_{M_1}$. Then $(M,h)$ is also a 
non-trivial subrepresentation of $(H,f)$ and 
$(H,f) = (L,g) \oplus (M,h)$. 

In the following we shall construct 
infinite-dimensional indecomposable Hilbert representations 
of the Kronecker quiver which are transitive. 

A perturbation of a weighted shift  by a rank-one operator is 
crucially used to show being transitive.

Let $H$ be a Hilbert space. Recall that for vectors $a,b \in H$, 
a rank-one operator $\theta_{a,b}$ is defined by 
$\theta_{a,b}(x) = (x|b)a, \ \ x \in H$.  

\begin{thm} \label{thm:perturbation}
Let $K=(V,E,s,r)$ be the Kronecker quiver
so that  $V=\{1,2\}$, $E=\{\alpha,\beta \}$ with 
 $s(\alpha)=1,s(\beta)=1,r(\alpha)=2$ and $r(\beta)=2,$. 
Let $S$ be the  unilateral shift on 
${\mathcal H}=\ell^{2}(\mathbb{N})$ with a canonical 
basis $\{e_{1},e_{2},...\}$. 
For a bounded weight vector 
${\lambda}=(\lambda_{1},\lambda_{2},...)\in \ell^{\infty}$ 
we associate with a diagonal operator 
$D_{\lambda}=\text{diag}(\lambda_{1},\lambda_{2},...)$, 
so that $SD_{\lambda}$ is a weighted shift operator. 
We assume that $\lambda_{i}\ne\lambda_{j}$ if $i\ne j$. 
Take a vector  
$\overline{w}=(\overline{w_{n}})_n \in \ell^{2}(\mathbb{N})$ such 
that $w_{n}\ne 0$ for any $n\in \mathbb{N}$.
Let $(H,f)$ be a Hilbert representation such that  
 $H_{1}=H_{2}={\mathcal H}=\ell^{2}(\mathbb{N})$ and
$f_{\alpha}=S,f_{\beta}= T := SD_{\lambda}+\theta_{e_{1},\overline{w}}$.
i.e., $f_{\beta}$ is a perturbation of 
a weighted shift  by a rank-one operator. 
Then the Hilbert representation $(H,f)$ is transitive.
\end{thm}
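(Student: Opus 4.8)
The plan is to compute $\mathrm{End}(H,f)$ directly and show it collapses to $\mathbb C I$. An endomorphism is a pair $(X_1,X_2)$ of bounded operators on $\mathcal H$ satisfying the two intertwining relations coming from the two arrows, namely $X_2 S = S X_1$ and $X_2 T = T X_1$, and the goal is to deduce $X_1 = X_2 = cI$ for some scalar $c$. First I would record the action of the generators on the basis: $Se_n = e_{n+1}$, $SD_\lambda e_n = \lambda_n e_{n+1}$, and $\theta_{e_1,\overline w}e_n = (e_n\mid\overline w)e_1 = w_n e_1$ (since $\overline w=(\overline{w_n})_n$), so that $Te_n = \lambda_n e_{n+1} + w_n e_1$. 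Writing $a_{mn} = (X_1 e_n\mid e_m)$ and $b_{mn} = (X_2 e_n\mid e_m)$ for the matrix coefficients, I would translate both relations into scalar equations; since everything happens at the level of basis vectors, no convergence subtleties arise.

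From the first relation, applying it to $e_n$ gives $X_2 e_{n+1} = SX_1 e_n$ for every $n\ge 1$, equivalently $b_{m,n+1} = a_{m-1,n}$ for $m\ge 2$ and $b_{1,n+1}=0$. Thus every column of $X_2$ except the first is the image under $S$ of the corresponding column of $X_1$, while $X_2 e_1$ is a priori unknown. Substituting $X_2 e_{n+1} = SX_1 e_n = \sum_m a_{mn}e_{m+1}$ into the second relation $X_2 T e_n = T X_1 e_n$ and comparing coefficients of each $e_k$, I expect two families of scalar identities: the coefficient of $e_1$ yields $w_n b_{11} = \sum_m a_{mn} w_m$, and the coefficient of $e_k$ for $k\ge 2$ yields, after reindexing $j=k-1$, the coupling $(\lambda_j - \lambda_n)a_{jn} = w_n b_{j+1,1}$ for all $j,n\ge 1$, tying the entries of $X_1$ to the first column of $X_2$.

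From here the argument unwinds in three steps. Setting $j=n$ in the coupling gives $0 = w_n b_{n+1,1}$, and since $w_n\ne 0$ this forces $b_{m1}=0$ for all $m\ge 2$, i.e. $X_2 e_1 = b_{11}e_1$. Feeding $b_{j+1,1}=0$ back into the coupling for $j\ne n$ gives $(\lambda_j-\lambda_n)a_{jn}=0$, and the hypothesis $\lambda_i\ne\lambda_j$ for $i\ne j$ then forces $a_{jn}=0$ off the diagonal, so $X_1 = \mathrm{diag}(a_{11},a_{22},\dots)$ is diagonal. Finally the $e_1$-identity becomes $w_n b_{11} = a_{nn} w_n$, whence $a_{nn}=b_{11}$ for all $n$ (again using $w_n\ne 0$); writing $c=b_{11}$ we get $X_1 = cI$, and then $X_2 e_{n+1} = SX_1 e_n = ce_{n+1}$ together with $X_2 e_1 = ce_1$ gives $X_2 = cI$, so that $\mathrm{End}(H,f)=\mathbb C I$.

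The computation is mechanical once set up; the substance — and the only place the hypotheses are genuinely used — lies in the third step. The distinctness of the weights $\lambda_n$ is exactly what upgrades a merely commuting $X_1$ from being diagonal-compatible to being diagonal, which is all the weighted shift alone would give. The non-vanishing of every $w_n$ then does double duty: it kills the "perturbation column" $(b_{m1})_{m\ge 2}$, and it afterwards links every diagonal entry of $X_1$ to the single scalar $b_{11}$. Thus the rank-one perturbation $\theta_{e_1,\overline w}$ is precisely the device that promotes the conclusion from "$X_1$ diagonal" to "$X_1$ scalar", yielding transitivity. The main care I expect to take is bookkeeping of the exceptional first-row/first-column indices produced by $S$ (the $m=1$ entries and the initially free $X_2 e_1$), but these are pinned down automatically by the identities above.
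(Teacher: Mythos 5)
Your proposal is correct and follows essentially the same route as the paper: both translate the relations $X_2S=SX_1$ and $X_2T=TX_1$ into matrix-coefficient identities, use the shift relation to force $X_2$ to be $b_{11}$ plus a shifted copy of $X_1$ (with free first column), then use the diagonal cases of the coupling $(\lambda_j-\lambda_n)a_{jn}=w_nb_{j+1,1}$ to kill that column, the distinctness of the $\lambda_n$ to make $X_1$ diagonal, and the first-row identity $w_nb_{11}=\sum_m a_{mn}w_m$ with $w_n\neq 0$ to conclude $X_1=X_2=b_{11}I$. The only cosmetic difference is that you organize the bookkeeping per basis vector while the paper multiplies out explicit infinite matrices; the equations and their order of use are identical.
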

\begin{proof}
We need an infinite matrix presentation of $T$: 
$$
T=SD_{\lambda}+\theta_{e_{1},\overline{w}}= 
\left(
\begin{array}{cccc}
w_{1}&w_{2}&w_{3}&... \\
\lambda_{1}&0&0&... \\
0&\lambda_{2}&0&... \\
0&0&\lambda_{3}&...\\
0&0&0& ...\\
\end{array}
\right)
$$
Take any  $(\phi,\psi)$ in $End(H,f)$. Write $\phi = (c_{ij})_{ij} 
\in B(\ell^{2}(\mathbb{N}))$ and 
$$
\psi=\left(
\begin{array}{cccc}
\alpha&\beta_{1}&\beta_{2}&\cdots \\ 
\gamma_{1}&d_{11}&d_{12}&\cdots\\
\gamma_{2}&d_{21}&d_{22}&\cdots\\
\vdots&\ddots&\ddots&\ddots
\end{array}\right)
\in B(\ell^{2}(\mathbb{N})).
$$
Then  $S\phi = \psi S$ means that
$$
\left(
\begin{array}{cccc}
0&0&0&\cdots\\
c_{11}&c_{12}&c_{13}&\cdots\\
c_{21}&c_{22}&c_{23}&\cdots\\
\vdots&\ddots&\ddots&\ddots\\
\end{array}\right)
= 
\left(
\begin{array}{cccc}
\beta_{1}&\beta_{2}&\beta_{3}&\cdots \\ 
d_{11}&d_{12}&d_{13}&\cdots\\
d_{21}&d_{22}&d_{23}&\cdots\\
\vdots&\ddots&\ddots&\ddots
\end{array}\right),
$$
Hence we have $(c_{ij})_{ij} = (d_{ij})_{ij}$ and 
$$
\left(
\begin{array}{cccc}
\beta_{1}&\beta_{2}&\beta_{3}&\cdots \\ 
\end{array}\right)
=\left(
\begin{array}{cccc}
0&0&0&\cdots \\ 
\end{array}\right). 
$$
Therefore we have 
$$
\phi=\left(
\begin{array}{ccc}
c_{11}&c_{12}&...\\
c_{21}&c_{22}&...\\
\vdots&...&...
\end{array}\right), 
\ \  \text{ and } \ \ 
\psi=\left(
\begin{array}{cccc}
\alpha&0&0&... \\ 
\gamma_{1}&c_{11}&c_{12}&...\\
\gamma_{2}&c_{21}&c_{22}&...\\
\vdots&...&...&...
\end{array}\right)
$$
Next we consider another compatibility condition $T\phi=\psi T$. 
Put $\widetilde{c_{k}}=\sum_{i=1}^{\infty}w_{i}c_{ik}$. 
Then we have 
\begin{align*}
T\phi
&=\left(
\begin{array}{cccc}
w_{1}&w_{2}&w_{3}&\cdots \\ 
\lambda_{1}&0&0&\cdots\\
0&\lambda_{2}&0&\cdots\\
\vdots&\ddots&\ddots&\ddots
\end{array}\right)
\left(
\begin{array}{cccc}
c_{11}&c_{12}&c_{13}&\cdots\\
c_{21}&c_{22}&c_{23}&\cdots\\
c_{31}&c_{32}&c_{33}&\cdots\\
\vdots&\ddots&\ddots&\ddots\\
\end{array}\right)  \\
&=\left(
\begin{array}{cccc}
\widetilde{c_{1}}&\widetilde{c_{2}}&\widetilde{c_{3}}&\cdots\\
\lambda_{1}c_{11}&\lambda_{1}c_{12}&\lambda_{1}c_{13}&\cdots\\
\lambda_{2}c_{21}&\lambda_{2}c_{22}&\lambda_{2}c_{23}&\cdots\\
\vdots&\ddots&\ddots&\ddots\\
\end{array}\right)
\end{align*}
We also have that 
\begin{align*}
\psi T
& =
\left(
\begin{array}{cccc}
\alpha&0&0&\cdots \\ 
\gamma_{1}&c_{11}&c_{12}&\cdots\\
\gamma_{2}&c_{21}&c_{22}&\cdots\\
\vdots&\ddots&\ddots&\ddots
\end{array}\right)
\left(
\begin{array}{cccc}
w_{1}&w_{2}&w_{3}&\cdots \\ 
\lambda_{1}&0&0&\cdots\\
0&\lambda_{2}&0&\cdots\\
\vdots&\ddots&\ddots&\ddots
\end{array}\right) \\
& =
\left(
\begin{array}{cccc}
\alpha w_{1}&\alpha w_{2}&\alpha w_{3}&\cdots \\ 
\gamma_{1}w_{1}+c_{11}\lambda_{1}
&\gamma_{1}w_{2}+c_{12}\lambda_{2}
&\gamma_{1}w_{3}+c_{13}\lambda_{3}
\cdots\\
\gamma_{2}w_{1}+c_{21}\lambda_{1}
&\gamma_{2}w_{2}+c_{22}\lambda_{2}
&\gamma_{2}w_{3}+c_{23}\lambda_{3}
\cdots\\
\vdots&\ddots&\ddots&\ddots
\end{array}\right)
\end{align*}

Hence we have the following relations: 
$\alpha w_{i}=\widetilde{c_{i}}(i=1,2,3.\cdots)$ 
and 
$$
\gamma_{1}w_{1}+c_{11}\lambda_{1}=\lambda_{1}c_{11},
$$
$$
\gamma_{1}w_{2}+c_{12}\lambda_{2}=\lambda_{1}c_{12},
$$
$$
\gamma_{1}w_{3}+c_{13}\lambda_{3}=\lambda_{1}c_{13},
\cdots.
$$
Since $\gamma_{1}w_{1}+c_{11}\lambda_{1}=\lambda_{1}c_{11}$,
we have $\gamma_{1}w_{1}=0$.
$w_{1}\ne 0$ implies that $\gamma_{1}=0$.
Therefore
$c_{12}\lambda_{2}=\lambda_{1}c_{12}$. 
Since $\lambda_{1}\ne \lambda_{2}$, 
we have $c_{12}=0$. 
Similarly we have $c_{1j}=0$ for $j\ne 1$.

Next look at the part including $\gamma_{2}$ parameter:
$$
\gamma_{2}w_{1}+c_{21}\lambda_{1}=\lambda_{2}c_{21},
\gamma_{2}w_{2}+c_{22}\lambda_{2}=\lambda_{2}c_{22},
\gamma_{2}w_{3}+c_{23}\lambda_{3}=\lambda_{2}c_{23},
\cdots.
$$
Since $\gamma_{2}w_{2}=0$ and 
$w_{2}\ne 0$, we have  $\gamma_{2}=0$.
By a similar  argument as above,
 we have $c_{2j}=0$ for $j\ne 2$.
In the same way, we have $\gamma_{i}=0$ 
for any $i$ and 
$c_{ij}=0$ for $i\ne j$.
Therefore we have that 
$$
\psi=
\left(
\begin{array}{cccc}
\alpha&0&0&\cdots \\ 
0&c_{11}&0&\cdots\\
0&0&c_{22}&\cdots\\
\vdots&\ddots&\ddots&\ddots
\end{array}\right), 
\ \ \text{ and } \ \ 
\phi
=\left(
\begin{array}{cccc}
c_{11}&0&0&\cdots \\ 
0&c_{22}&0&\cdots\\
0&0&c_{33}&\cdots\\
\vdots&\ddots&\ddots&\ddots
\end{array}\right).
$$  

Since 
$\widetilde{c_{1}}=\alpha w_{1}$ 
and 
$\widetilde{c_{1}}=
w_{1}c_{11}+w_{2}c_{21}+w_{3}c_{31}+\cdots = w_{1}c_{11}$,
we have $\alpha=c_{11}$, because $w_{1}\ne 0$. 
Similarly the equations 
$\widetilde{c_{2}}=\alpha w_{2}$,
$\widetilde{c_{3}}=\alpha w_{3},\cdots$  
and $w_{i}\ne 0$, we have 
$\alpha= c_{ii}$ for any $i$. 
Hence $(\phi,\psi) = (\alpha I,\alpha I). $
Thus $(H,f)$ is transitive. 
\end{proof}

Next we shall construct a transitive Hilbert representation of 
the Kronecker quiver in another method. It is 
a modification of an unbounded operator 
which provides a transitive lattice 
by Harrison,Radjavi and Rosenthal \cite{HRR}.

Let $A$  and $B$ be bounded operators on a 
Hilbert space ${\mathcal H} $. Let 
$(H^{(A,B)},f^{(A,B)})$ be a Hilbert representation 
of the Kronecker quiver $K$
defined by $H_1 = {\mathcal H}, H_2 = {\mathcal H}, 
f_{\alpha} = A$ and $f_{\beta}= B$. 
If $A$ is invertible, then 
$(H^{(A,B)},f^{(A,B)})$ is isomorphic to 
$(H^{(I,A^{-1}B)},f^{(I,A^{-1}B)})$. 
Even if $A$ does not have a bounded inverse, 
$A^{-1}B$ can be  an unbounded operator. Hence 
if an unbounded operator $C$ is formally written 
by $C = "A^{-1}B"$, then we might replace 
$(H^{(I,C)},f^{(I,C)})$ by 
$(H^{(A,B)},f^{(A,B)})$ to keep it in the category of 
bounded operators. We shall adapt the idea to  
an unbounded operator $C$ which 
gives a transitive lattice in  \cite{HRR}. 
We also modify it a little bit to make a calculation of 
$End(H,f)$ easier.

\begin{thm}
Let $K$ be the  Kronecker quiver and 
${\mathcal H} = \ell^{2}(\mathbb{Z})$. 
Fix a positive constant  $\lambda>1$. 
Consider two weight sequences 
$a = (a(n))_{n \in \mathbb{Z}}$ and 
$b = (b(n))_{n \in \mathbb{Z}}$ by 
$$
a(n)= 
\begin{cases} e^{-\lambda^{n}},&   (n\geq 1, n \text{\ is even }) \\
  1, &  \ \  (otherwise), 
\end{cases}
\ \ \ \ \ 
b(n)=
\begin{cases} e^{-\lambda^{n}},&   (n\geq 1,n \text{\ is odd })  \\
 1, & \ \  (otherwise).
\end{cases}
$$
Let $D_{a}$  be a diagonal operator with 
$a =(a(n))_n$ as diagonal coefficients. 
and 
$D_{b}$ be a diagonal operator with 
$b =(b(n))_n$ as diagonal coefficients. 
Let $U$ be the bilateral  forward shift. 
Put $A = D_{a}$ and $B=UD_{b}$, 
so that
$A$ is a positive operator and 
$B$ is a weighted  forward shift operator.
Define a Hilbert representation $(H^{\lambda},f^{\lambda})$ of 
the Kronecker quiver $K$ by 
$H^{\lambda}_{0}={\mathcal H},H^{\lambda}_{1}={\mathcal H}$, 
$f^{\lambda}_{\alpha}=A$ and $f^{\lambda}_{\beta}=B$. 
Then the Hilbert representation $(H^{\lambda},f^{\lambda})$ 
of $K$ is transitive and is not isomorphic to any of 
the Hilbert representation in Theorem \ref{thm:perturbation}
 constructed by 
a perturbation of a weighted shift  by a rank-one operator. 
\end{thm}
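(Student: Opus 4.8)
The plan is to compute $End(H^{\lambda},f^{\lambda})$ directly at the level of matrix entries. An endomorphism is a pair $(\phi,\psi)$ of bounded operators on ${\mathcal H}=\ell^{2}(\mathbb{Z})$ satisfying $\psi A=A\phi$ and $\psi B=B\phi$. Writing $\phi=(\phi_{ij})$ and $\psi=(\psi_{ij})$ in the standard basis and using that $A=D_{a}$ is diagonal while $B=UD_{b}$ is a weighted forward shift, the relation $\psi A=A\phi$ reads $a(j)\psi_{ij}=a(i)\phi_{ij}$, hence $\phi_{ij}=\frac{a(j)}{a(i)}\psi_{ij}$ (all $a(i)>0$), and $\psi B=B\phi$ reads $b(j)\psi_{i,j+1}=b(i-1)\phi_{i-1,j}$. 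Substituting the first relation into the second I obtain a single recursion along diagonals, $\psi_{i+1,j+1}=\frac{r(j)}{r(i)}\psi_{ij}$, where $r(n)=a(n)/b(n)$.

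Two consequences follow, and the second is where the real work lies. On the main diagonal ($i=j$) the recursion gives $\psi_{i+1,i+1}=\psi_{ii}$, so all diagonal entries equal one scalar $c$, and then $\phi_{ii}=c$ as well. Off the diagonal I fix a nonzero offset $d=j-i$ and telescope the recursion to get $\psi_{m,m+d}=\psi_{0,d}\prod_{i=0}^{m-1}\frac{r(i+d)}{r(i)}$ for $m\ge 1$. The crux is that this product is \emph{unbounded} in $m$. With $\ell(n)=\log r(n)$ one has $\ell(n)=0$ for $n\le 0$ and $\ell(n)=(-1)^{n+1}\lambda^{n}$ for $n\ge 1$ (this is exactly where the interlaced double-exponential weights $e^{-\lambda^{n}}$ enter), so for large $m$ the logarithm of the product equals $\sum_{i=m}^{m+d-1}\ell(i)-\sum_{i=0}^{d-1}\ell(i)$, and the geometric-sum identity $\sum_{i=m}^{m+d-1}(-1)^{i+1}\lambda^{i}=(-1)^{m+1}\lambda^{m}\kappa_{d}$ with $\kappa_{d}=\frac{1-(-\lambda)^{d}}{1+\lambda}$ shows that its absolute value grows like $\lambda^{m}|\kappa_{d}|$. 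Here $\kappa_{d}\ne 0$ because $\lambda>1$ forces $(-\lambda)^{d}\ne 1$ for $d\ne 0$. Since a bounded operator has uniformly bounded matrix entries ($|\psi_{ij}|\le\|\psi\|$), $\psi_{0,d}\ne 0$ would make $\sup_{m}|\psi_{m,m+d}|$ infinite; hence $\psi_{0,d}=0$ and the whole $d$-diagonal vanishes. The offset $d<0$ is handled identically by telescoping in the opposite direction. Therefore $\psi=cI$ and $\phi=cI$, so $End(H^{\lambda},f^{\lambda})={\mathbb C}I$, i.e. the representation is transitive. I expect the geometric-sum estimate and the verification $\kappa_{d}\ne 0$ to be the main obstacle; everything else is bookkeeping.

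For the non-isomorphism statement I use an isomorphism invariant attached to the arrow $\alpha$. If $(H^{\lambda},f^{\lambda})\cong(H,f)$ via invertibles $(\varphi_{1},\varphi_{2})$, then $f_{\alpha}=\varphi_{2}f^{\lambda}_{\alpha}\varphi_{1}^{-1}$, and since $\varphi_{2}$ is a bounded invertible operator, hence a homeomorphism, it carries the closure of the range onto the closure of the range; in particular dense range is preserved and $\codim\overline{\Im\,f_{\alpha}}=\codim\overline{\Im\,f^{\lambda}_{\alpha}}$. In the present representation $f^{\lambda}_{\alpha}=A=D_{a}$ is injective and self-adjoint, so $\overline{\Im\,A}=(\Ker A)^{\perp}={\mathcal H}$, i.e. $A$ has dense range. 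In every representation of Theorem \ref{thm:perturbation} the arrow $\alpha$ is sent to the unilateral shift $S$, whose range $\{e_{1}\}^{\perp}$ is closed of codimension one, so $\overline{\Im\,S}\ne{\mathcal H}$. A surjective homeomorphism cannot map a dense subspace onto a proper closed subspace, so no isomorphism can exist, and $(H^{\lambda},f^{\lambda})$ is not isomorphic to any of the representations built by the rank-one perturbation of a weighted shift.
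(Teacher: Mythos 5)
Your proof is correct and follows essentially the same route as the paper: the paper likewise reduces $\psi A=A\phi$, $\psi B=B\phi$ to the diagonal recursion $t_{m+1,n+1}=\frac{w_m}{w_n}t_{m,n}$ (with $w_n=b(n)/a(n)=1/r(n)$), gets a constant diagonal, and kills every off-diagonal entry by exactly your geometric-sum blowup of $\sum_i(-\lambda)^i$ combined with the uniform bound $|\psi_{ij}|\le\|\psi\|$. The only (cosmetic) divergence is the final non-isomorphism step, where you invoke preservation of dense range ($A$ has dense range, $S$ does not) while the paper invokes preservation of closed range ($S$ has closed range, $A$ does not); both are instances of the same invariant, the closure of the image of $f_\alpha$ under a bi-continuous isomorphism.
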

\begin{proof}
Let $\{e_n\}_{n \in {\mathbb Z}}$ be the canonical basis of 
${\mathcal H} = \ell^{2}(\mathbb{Z})$.

Let $T=(T_{1},T_{2})$ be in $End(H^{\lambda},f^{\lambda}).$
Then  $T_{2}A=AT_{1}$ and  $T_{2}B=BT_{1}$, that is, 
$T_{2}D_{a}=D_{a}T_{1}$ and  $T_{2}UD_{b}=UD_{b}T_{1}.$
Since
$(T_{2}D_{a}e_{n}\mid e_{m})=(D_{a}T_{1}e_{n}\mid e_{m}),$
we have 
$a(n)(T_{2}e_{n}\mid e_{m})=a(m)(T_{1}e_{n}\mid e_{m}).$
Hence
$$
(T_{1}e_{n}\mid e_{m})=\frac{a(n)}{a(m)}(T_{2}e_{n}\mid e_{m}).
$$
Since $(T_{2}UD_{b}e_{n}\mid e_{m})=(UD_{b}T_{1}e_{n}\mid e_{m}),$
we have
$b(n)(T_{2}e_{n+1}\mid e_{m})=(T_{1}e_{n}\mid {b(m-1)}e_{m-1}).$
Replacing $m$ by $m+1$, we obtain 
$$
(T_{1}e_{n}\mid e_{m})= \frac{b(n)}{b(m)}(T_{2}e_{n+1}\mid e_{m+1}).
$$
Combining these equations, we have 
$\frac{a(n)}{a(m)}(T_{2}e_{n}\mid e_{m})
=\frac{b(n)}{b(m)}(T_{2}e_{n+1}\mid e_{m+1}).$
Put  $w_{m}=\frac{b(m)}{a(m)}. $ 
Then  $w_{m} =e^{(-\lambda)^{m}}$ for $m\geq 1$. 
$$ 
(T_{2}e_{n+1}\mid e_{m+1})
=\frac{w_{m}}{w_{n}}(T_{2}e_{n}\mid e_{m}).
$$
Since  $m-n$ component $t_{m,n}$ of $T_2$ is given by  
$t_{m,n} = (T_{2}e_{n}\mid e_{m})$, 
$t_{m+1,n+1} = \frac{w_{m}}{w_{n}}t_{m,n}$. 
Putting $m =n$, we have $t_{n+1,n+1} = t_{n,n}$.
This shows that the diagonal of $T_2$ is a constant. 
We shall show that $T_2$ is a scalar operator. 
Suppose that  $T_{2}$ were not a scalar operator. 
Then there exist integers $m\neq n$ such that $t_{m,n}\neq 0$.  
For any integer $k \geq 1$, we have 
$$
t_{m+k,n+k}
= \frac{w_{m}w_{m+1}\cdots w_{m+k-1}}{w_{n}w_{n+1}\cdots w_{n+k-1}}
t_{m,n}. 
$$
Define $c_{k}(m,n)$ by 
$\frac{w_{m}w_{m+1}\cdots w_{m+k-1}}{w_{n}w_{n+1}\cdots w_{n+k-1}}
=e^{c_{k}(m,n)}.$  First consider the case that $m \geq 1$ and 
$n \geq 1$. 
Then we have 
\begin{align*}
c_{k}(m,n) 
& =((-\lambda)^{m}+(-\lambda)^{m+1}+\cdots +(-\lambda)^{m+k-1}) \\
& -((-\lambda)^{n}+(-\lambda)^{n+1}+\cdots +(-\lambda)^{n+k-1})  \\
& =\frac{(-\lambda)^{m}(1-(-\lambda)^{k})}{1+\lambda}
-\frac{(-\lambda)^{n}(1-(-\lambda)^{k})}{1+\lambda}.
\end{align*}
Since 
$\limsup_{k} c_{k}(m,n)=\infty$, we have  
$\limsup_{k}t_{m+k,n+k} = \infty.$ 
This contradicts to the boundedness of $T_{2}$. 
The other cases are similarly proved. 
Consequently $T_{2} = \alpha I$ for some constant $\alpha$. 

Since 
$(T_{1}e_{n}\mid e_{m})=\frac{a(n)}{a(m)}(T_{2}e_{n}\mid e_{m}),$
we have $(T_{1}e_{n}\mid e_{m}) = 0$ for $m \not= n$ and 
$(T_{1}e_{n}\mid e_{n}) = \alpha$ for any $n \in {\mathbb Z}$. 
Hence 
$T_{1}=T_{2}= \alpha I$.
This shows that the Hilbert representation 
$(H^{\lambda},f^{\lambda})$ is transitive.

The Hilbert representation $(H^{\lambda},f^{\lambda})$ 
is not isomorphic to any of 
the Hilbert representation $(H,f)$ in \ref{thm:perturbation} constructed by 
a perturbation of a weighted shift  by a rank-one operator. In fact 
the image of $f_{\alpha}$ is closed but  the 
image of $f^{\lambda}_{\alpha}$ is not closed. 

\end{proof}

A little more careful calculation shows that 
$(H^{\lambda},f^{\lambda})$ and $(H^{\mu},f^{\mu})$ 
are not isomorphic if $\lambda \not= \mu $, 
$\lambda > 1$  and $\mu > 1$ as follows: 

\begin{thm}
Let $K$ be the Kronecker quiver. 
Then there exist continuously many non-isomorphic 
Hilbert representations $(H^{\lambda},f^{\lambda})$ 
of the  Kronecker quiver $K$ which are transitive. 
\end{thm}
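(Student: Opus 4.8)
The plan is to reuse the family $(H^{\lambda},f^{\lambda})$ constructed in the preceding theorem, now letting the parameter $\lambda$ range over the whole interval $(1,\infty)$. Each member is already shown to be transitive, so the only thing left to prove is that $(H^{\lambda},f^{\lambda})$ and $(H^{\mu},f^{\mu})$ are non-isomorphic whenever $1<\mu<\lambda$; since $(1,\infty)$ has the cardinality of the continuum and isomorphism is an equivalence relation, this yields continuously many non-isomorphic transitive representations. By the symmetry of the isomorphism relation I may assume $\lambda>\mu>1$ throughout, writing $a_{\lambda},b_{\lambda}$ and $a_{\mu},b_{\mu}$ for the two weight families and $D_{a_{\lambda}}$, $D_{b_{\lambda}}$, etc.\ for the corresponding diagonal operators.

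Suppose for contradiction that $\varphi=(\varphi_{1},\varphi_{2})$ is an isomorphism from $(H^{\lambda},f^{\lambda})$ onto $(H^{\mu},f^{\mu})$, so that $\varphi_{1},\varphi_{2}$ are bounded invertible operators on $\ell^{2}(\mathbb{Z})$ with $\varphi_{2}D_{a_{\lambda}}=D_{a_{\mu}}\varphi_{1}$ and $\varphi_{2}UD_{b_{\lambda}}=UD_{b_{\mu}}\varphi_{1}$. First I would repeat verbatim the matrix computation from the transitivity proof, but now with the two different weight families: setting $s_{m,n}=(\varphi_{2}e_{n}\mid e_{m})$ and eliminating $\varphi_{1}$ between the two intertwining relations, I expect to reach the recursion
$$
s_{m+1,n+1}=\frac{w^{\mu}_{m}}{w^{\lambda}_{n}}\,s_{m,n},
\qquad w^{\nu}_{k}=\frac{b_{\nu}(k)}{a_{\nu}(k)},
$$
which is the exact analogue of the single-parameter recursion except that the numerator weight is attached to $\mu$ and the denominator weight to $\lambda$. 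As in the previous proof, $w^{\nu}_{k}=e^{(-\nu)^{k}}$ for $k\geq 1$ and $w^{\nu}_{k}=1$ for $k\leq 0$.

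The heart of the argument is then a growth estimate. Fixing any pair $(m,n)$ with $s_{m,n}\neq 0$ and iterating the recursion, I would obtain, for all large $k$,
$$
s_{m+k,n+k}=\exp\!\Big(\sum_{j=0}^{k-1}\big((-\mu)^{m+j}-(-\lambda)^{n+j}\big)\Big)\,s_{m,n},
$$
and I would evaluate the exponent by the same geometric-series identity used for $c_{k}(m,n)$ in the preceding proof. Because $\lambda>\mu$, the term $\frac{(-\lambda)^{n+k}}{1+\lambda}$ dominates all others in absolute value as $k\to\infty$; restricting to the subsequence of $k$ with $n+k$ even makes this dominant term equal to $+\frac{\lambda^{n+k}}{1+\lambda}\to+\infty$, so $|s_{m+k,n+k}|\to\infty$ along that subsequence. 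This contradicts the boundedness of $\varphi_{2}$. Hence $s_{m,n}=0$ for all $m,n$, i.e.\ $\varphi_{2}=0$, contradicting the invertibility of $\varphi_{2}$, and the non-isomorphism follows.

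I expect the main obstacle to be the asymptotic bookkeeping in this last step. One must verify that the single dominant $(-\lambda)$-term genuinely overwhelms the $(-\mu)$-contribution uniformly in the fixed offsets $m,n$, and also check that the finitely many boundary indices $\leq 0$ (where the weights equal $1$) only contribute a fixed nonzero multiplicative constant and hence do not disturb the $\limsup$. Once the $\limsup$ of the exponent is shown to be $+\infty$ along the even subsequence, the contradiction is immediate, and since the transitivity of each $(H^{\lambda},f^{\lambda})$ is already in hand, the theorem is established.
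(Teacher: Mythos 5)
Your proposal is correct and follows essentially the same argument as the paper: eliminate $\varphi_1$ from the two intertwining relations to get the recursion $s_{m+1,n+1}=\frac{w^{\mu}_{m}}{w^{\lambda}_{n}}s_{m,n}$, sum the geometric series in the exponent, and show it has $\limsup=+\infty$ along a parity subsequence, contradicting boundedness of $\varphi_2$. The only cosmetic difference is that you normalize $\lambda>\mu$ (so the dominant term comes from the source weights, along $n+k$ even) while the paper takes $1<\lambda<\mu$ and shows $Hom((H^{\lambda},f^{\lambda}),(H^{\mu},f^{\mu}))=0$ outright; the two are equivalent by symmetry of the isomorphism relation.
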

\begin{proof}
Let $\lambda$ and $\mu$ be positive constants such that 
$\lambda \not= \mu, \ 
\lambda > 1$  and $\mu > 1$. It is enough to show that 
$Hom ((H^{\lambda},f^{\lambda}),(H^{\mu},f^{\mu})) = 0$.  
We shall write 
$A^{\mu},B^{\mu},{a}^{\mu}$ and ${b}^{\mu}$ for 
$(H^{\mu},f^{\mu})$. 
Take any  homomorphism 
$(T_{1},T_{2}) \in Hom((H^{\lambda},f^{\lambda}),(H^{\mu},f^{\mu}))$. 
Then  we have 
$T_{2}A^{\lambda}=A^{\mu}T_{1}$ and 
$T_{2}B^{\lambda}=B^{\mu}T_{1}$.
Since $(T_{2}A^{\lambda}e_{n}\mid e_{m})=(A^{\mu}T_{1}e_{n}\mid e_{m})$, 
we have 
$a^{\lambda}(n)(T_{2}e_{n}\mid e_{m})=(T_{1}e_{n}\mid a^{\mu}(m)e_{m})$.
Hence we obtain that 
$$
(T_{1}e_{n}\mid e_{m})=\frac{a^{\lambda}(n)}{a^{\mu}(m)}(T_{2}e_{n}\mid e_{m})
$$
Next consider 
$(T_{2}B^{\lambda}e_{n}\mid e_{m})=(B^{\mu}T_{1}e_{n}\mid e_{m})$.
Since 
$(T_{2}UD_{b}^{\lambda}e_{n}\mid e_{m}) 
=(T_{1}e_{n}\mid D_{b}^{\mu}U^{*}e_{m})$, 
we have that 
${b}^{\lambda}(n)(T_{2}e_{n+1}\mid e_{m}) 
= {b}^{\mu}(m-1)(T_{1}e_{n}\mid e_{m-1})$.  
Replacing $m$ by $m+1$, we obtain 
$$
(T_{1}e_{n}\mid e_{m})
=\frac{{b}^{\lambda}(n)}{{b}^{\mu}(m)}(T_{2}e_{n+1}\mid e_{m+1}). 
$$
Combining these equations, we have 
$$(T_{2}e_{n+1}\mid e_{m+1})
=\frac{{a}^{\lambda}(n)}{{a}^{\mu}(m)}\frac{b^{\mu}(m)}{b^{\lambda}(n)}
(T_{2}e_{n}\mid e_{m})
$$.

We put $w^{\mu}_{m}=\frac{b^{\mu}(m)}{{a}^{\mu}(m)}$. 
Then 
$$
(T_{2}e_{n+1}\mid e_{m+1})
= \frac{w^{\mu}_{m}}{w^{\lambda}_{n}}(T_{2}e_{n}\mid e_{m}).
$$
For any integer $k \geq 1$, we have 

$$ 
(T_{2}e_{n+k}\mid e_{m+k})
= \frac{w^{\mu}_{m}\cdots w^{\mu}_{m+k-1}}
{w^{\lambda}_{n}\cdots w^{\lambda}_{n+k-1}}(T_{2}e_{n}\mid e_{m})
$$
Define $c_{k}(m,n)$ by 
$$
 \frac{w^{\mu}_{m}\cdots w^{\mu}_{m+k-1}}
{w^{\lambda}_{n}\cdots w^{\lambda}_{n+k-1}}
= e^{c_{k}(m,n)}.
$$
Then we have 
$$
c_{k}(m,n)
=((-\mu)^{m}+\cdots (-\mu)^{m+k-1})-
((-\lambda)^{n}+\cdots (-\lambda)^{n+k-1})
$$
Since $\mu \neq \lambda$, we may and do assume that $1<\lambda<\mu$.
We shall show that $T_{2} =0$.  On the contrary, assume that 
$T_{2}\neq 0$. Then there exist integers 
$m,n$ such that $(T_{2}e_{n}\mid e_{m}) \not= 0 $. 
We can  show that $\limsup_{k}c_{k}(m,n)=\infty.$ 
For example, if $m \geq 1$ and $n \geq 1$, then 
$$
c_{k}(m,n)=\frac{(-\mu)^{m}(1-(-\mu)^{k})}{1+\mu}
\{1-\frac{(-\lambda)^{n}(1+\mu)(1-(-\lambda)^{k})}
{(-\mu)^{m}(1+\lambda)(1-(-\mu)^{k})}\}
$$
Hence $\limsup_{k}c_{k}(m,n)=\infty.$ The rest cases are similarly 
proved. 
But this contradicts that $T_{2}$ is bounded.
Therefore  $T_{2}=0$.
Since 
$(T_{1}e_{n}\mid e_{m})=\frac{a^{\lambda}(n)}{a^{\mu}(m)}(T_{2}e_{n}\mid e_{m})$,
we also have $T_{1}$=0.
This shows that $T=(T_{1},T_{2})=0$.
Hence we have that 
$Hom((H^{\lambda},f^{\lambda}),(H^{\mu},f^{\mu}))=0$.
Therefore 
$(H^{\lambda},f^{\lambda})$ is not isomorphic to $(H^{\mu},f^{\mu})$.
\end{proof}

\section{Difference between purely algebraic version and Hilbert space 
version}

There exist subtle difference among  purely algebraic infinite-dimensional 
representations of quivers, infinite-dimensional Banach (space) 
representations of quivers and infinite-dimensional Hilbert (space)
representations of quivers.  We also note that the analytic aspect of 
Hardy space is quite important in our setting.

We shall use the following elementary fact: Let $A$ be a 
unital algebra and $L(A)$ be the set of linear operators 
on $A$. . Let $\lambda : A \rightarrow L(A)$ be the 
left multiplication such that $\lambda_ax = ax$ for $a,x\in A$. 
Similarly let  $\rho : A \rightarrow L(A)$ be the  
right multiplication such that $\rho_ax = xa$ for $a,x \in A$. 
Then the commutant 
$$
\lambda(A)' := \{T \in L(A) \ | \ T\lambda_a = \lambda_a T 
 \text{ for any } a \in A \}
$$ 
is exactly $\rho(A)$. In fact, let $T \in \lambda(A)'$.  
 Put $b = T(1) \in A$. Then $T\lambda_a (1) = \lambda_a T(1)$ for any 
$a \in A$. 
Hence $T(a) = ab = \rho_b(a)$. Therefore $T = \rho_b$.

\noindent  
{\bf Definition.}
Let $\Gamma=(V,E,s,r)$ be a finite quiver. We say
that $(K,f)$ is a  {\it Banach representation} of $\Gamma$ 
if $K=(K_{v})_{v\in V}$  is a family of  Banach spaces 
and $f=(f_{\alpha})_{\alpha \in E}$ is a family of
 bounded linear operators such that $f_{\alpha} : 
K_{s(\alpha)}\rightarrow K_{r(\alpha)}$ for $\alpha \in E$. 
A Banach  representation $(K,f)$ of $\Gamma$ 
is called  {\it decomposable} if 
$(K,f)$ is isomorphic to a direct sum of two 
non-zero Banach representations.  
A non-zero Banach representation $(H,f)$ of $\Gamma$ 
is said to be  {\it indecomposable} if 
it is not decomposable. The other notions are similarly defined 
for Banach representations. A Banach  representation $(H,f)$ is 
indecomposable if and only if any endomorphism of $(H,f)$ which 
is 
idempotent is $0$ or $I$. 

\bigskip
\noindent
{\bf Example 10.}Let $L_1$ be one-loop quiver, that is, 
$L_1$ is a quiver with one vertex $\{1\}$ and 
$1$-loop $\{\alpha \}$ such that $s(\alpha) = r(\alpha) = 1$. 
 Consider a purely algebraic 
group algebra $V_1 := {\mathbb C}[{\mathbb Z}]$, the 
reduced group $C^*$-algebra 
$K_1:=C_r^*({\mathbb Z})$, the group von Neumann algebra $W^*({\mathbb Z})$ 
and a Hilbert space $H_1 =\ell^2({\mathbb Z})$. 
We identify $V_1$ with the algebra of finite Laurant polynomials. 
As a set we have inclusions under a certain identification:
$$
 V_1 = {\mathbb C}[{\mathbb Z}] \subset K_1 = C_r^*({\mathbb Z}) 
\subset W^*({\mathbb Z}) \subset \ell^2({\mathbb Z}) = H_1 . 
$$
Moreover $V_1$ is a dense subset of $H_1$ with respect to 
 the topology of $\ell^2$ norm of 
$H_1$. Define a purely algebraic representation $(V,T)$ of $L_1$ 
 by $V_1 = {\mathbb C}[{\mathbb Z}]$ and the 
multiplication operator $T_{\alpha}$ by $z$.  That is, 
$T_{\alpha}h(z) = zh(z)$ for a finite Laurant polynomial 
$h(z) = \sum_n a_n z^n \in {\mathbb C}[{\mathbb Z}]$. 
Similarly we can define a Banach space representation 
$(K,S)$ of $L_1$ 
 by $K_1 =  C_r^*({\mathbb Z}) 
\cong C({\mathbb T})$ and the 
multiplication operator $S_{\alpha}$ by $z$. 

Since 
$$
End(V,T) \cong {\mathbb C}[{\mathbb Z}] \subset  C_r^*({\mathbb Z}) 
\cong C({\mathbb T})
$$ 
and $C({\mathbb T})$  
have no non-trivial idempotents, 
the purely algebraic representation $(V,T)$ and 
the Banach  representation $(K,S)$ is indecomposable. 
On the other hand, the multiplication operator $U_{\alpha}$ by $z$ 
on $H_1 = \ell^2({\mathbb Z})\cong L^2({\mathbb T})$ 
gives a Hilbert representation $(H,U)$. Since $U$ is a unitary,  
 for any operator $A$, $AU_{\alpha} = U_{\alpha}A$ 
implies $AU_{\alpha}^* = U_{\alpha}^*A$.  
Therefore 
$$
End(H,U)  \cong \{A \in B(\ell^2({\mathbb Z})) 
\ |\ AU_{\alpha} = U_{\alpha}A \} 
\cong L^{\infty}({\mathbb T}) . 
$$ 
because $L^{\infty}({\mathbb T})$ is maximal abelian in 
$B(L^2({\mathbb T})) \cong B(\ell^2({\mathbb Z}))$. 
Since $L^{\infty}({\mathbb T})$ has many non-trivial idempotents, 
the Hilbert representation $(H,U)$ 
is {\it not} indecomposable. Therefore the  completion by the 
$L^2$-norm  changes the indecomposability but the 
the  completion by the 
sup-norm  does not change the indecomposability. 
The example suggests  
that proving  indecomposability 
for Hilbert representations is sometimes more difficult 
than proving indecomposability for purely algebraic representations.

We shall extend the above example to the $n$-loop quiver. 

\begin{prop}
Let $L_n$ be the $n$-loop quiver, 
that is, 
$L_n$ is a quiver with one vertex $\{1\}$ and 
$n$-loops $\{\alpha_k \ | k =1, \dots, n \}$ 
with $s(\alpha_k) = r(\alpha_k) = 1$ for $k = 1,\dots, n$. 
Let $F_n$ be the (non-abelian) free group of $n$-generators 
$\{a_1, \dots, a_n\}$. 
Consider the  purely algebraic 
group algebra $V_1 := {\mathbb C}[F_n]$, 
the reduced group $C^*$-algebra 
$K_1 =: C^*_r(F_n)$, 
the group von Neumann algebra $W^*(F_n)$ 
and a Hilbert space $H_1 =\ell^2(F_n)$. 
Let $\{\delta_g \ | \ g \in F_n\}$ be a standard basis of 
$H_1 =\ell^2(F_n)$ such that $\delta_g(h) = 1$ if $g=h$ 
and $\delta_g(h) = 0$ if $g \not=h$. 
Let $\lambda: F_n \rightarrow B(\ell^2(F_n))$ be the left regular
representation, that is, $\lambda_s(\delta_g) = \delta_{sg}$. 
The left regular representation defines a Hilbert representation 
$(H,f)$ by $H_1 =\ell^2(F_n)$ and $f_{\alpha_k} = \lambda_{a_k}$ 
for $k = 1,\dots, n$. Define a Banach representation $(K,U)$ by 
its restriction to $C^*_r(F_n)$, that is, 
$K_1 =C^*_r(F_n)$ and $U_{\alpha_k} = \lambda_{a_k}|_{K_1}$ for
$k = 1,\dots, n$. 
We also consider a purely algebraic representation $(V,T)$ by representation
its restriction to ${\mathbb C}[F_n]$, that is, 
$V_1 = {\mathbb C}[F_n]$ and $T_{\alpha_k} = \lambda_{a_k}|_{V_1}$ 
for  $k = 1,\dots, n$. Then the  purely algebraic representation $(V,T)$
and Banach representation $(K,U)$ are indecomposable but 
the Hilbert representation $(H,f)$ is not indecomposable. 
\end{prop}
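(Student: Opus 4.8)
The plan is to compute the endomorphism algebra in each of the three settings and to read off (in)decomposability from the presence or absence of non-trivial idempotents, via Proposition~\ref{prop:indecomposable-idempotent}. In all three cases the common mechanism is the same: an endomorphism is an operator on the single vertex space commuting with left multiplication by $\lambda_{a_1},\dots,\lambda_{a_n}$, and since each $\lambda_{a_k}$ is invertible and the $a_k$ generate $F_n$, such an operator commutes with $\lambda_g$ for every $g\in F_n$, hence with left multiplication by every element of the group algebra. The decisive difference between the three completions of $\mathbb{C}[F_n]$ will be whether the resulting commutant admits non-trivial idempotents.

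For the purely algebraic representation $(V,T)$ I would apply the elementary fact recalled just before the statement: the commutant of all left multiplications inside $L(\mathbb{C}[F_n])$ is exactly the algebra $\rho(\mathbb{C}[F_n])$ of right multiplications. Thus every $P\in End(V,T)$ has the form $P=\rho_b$ with $b=P(1)$, and $P^2=P$ forces $b^2=b$. Since $\mathbb{C}[F_n]$ embeds in $C^*_r(F_n)$, which has no non-trivial idempotents, we get $b\in\{0,1\}$ and so $P\in\{0,I\}$; hence $(V,T)$ is indecomposable. For the Banach representation $(K,U)$ I would run the same computation while keeping track of norms: a bounded $P$ commuting with all $\lambda_g$ satisfies $P(\lambda_g)=\lambda_g\,P(1)=\lambda_g\,b$, hence $P(x)=xb$ for $x\in\mathbb{C}[F_n]$, and since $\mathbb{C}[F_n]$ is dense in $C^*_r(F_n)$ and both $P$ and $\rho_b$ are bounded, $P=\rho_b$ on all of $C^*_r(F_n)$. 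Again $P^2=P$ gives $b^2=b$, and the absence of non-trivial idempotents in $C^*_r(F_n)$ yields $P\in\{0,I\}$, so $(K,U)$ is indecomposable.

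For the Hilbert representation $(H,f)$ the commutant is genuinely larger. Here $End(H,f)$ is the set of bounded operators on $\ell^2(F_n)$ commuting with every $\lambda_g$, i.e. the commutant $\lambda(F_n)'$ of the left regular representation. By the commutation theorem for group von Neumann algebras this is the von Neumann algebra $R(F_n)$ generated by the right regular representation, which for $n\geq 2$ is the free group factor, a type $\mathrm{II}_1$ factor. Such a factor contains projections of every trace in $[0,1]$, in particular non-trivial ones, and any such projection is a non-trivial idempotent in $End(H,f)$. By Proposition~\ref{prop:indecomposable-idempotent} the representation $(H,f)$ is therefore \emph{not} indecomposable, the corresponding orthogonal splitting of $\ell^2(F_n)$ into two $\lambda_{a_k}$-invariant subspaces exhibiting the decomposition explicitly.

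The routine linear algebra and the density/boundedness bookkeeping are straightforward; the one substantial input is the idempotent theorem for free groups, namely that $C^*_r(F_n)$ (and a fortiori its dense subalgebra $\mathbb{C}[F_n]$) has no idempotents other than $0$ and $1$. This is the Kadison--Kaplansky conjecture for free groups, established through the $K$-theory computation $K_0(C^*_r(F_n))\cong\mathbb{Z}$ of Pimsner and Voiculescu; invoking it is the real content of the two indecomposability assertions, whereas the decomposability in the Hilbert setting rests only on the standard fact that the free group factor has non-trivial projections. The main conceptual point to stress is that the three completions of $\mathbb{C}[F_n]$ produce three different commutants, and only the $\ell^2$-completion enlarges the commutant enough to manufacture idempotents.
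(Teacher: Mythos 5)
Your proposal is correct and follows essentially the same route as the paper: identify $End(V,T)\cong{\mathbb C}[F_n]$ and $End(K,U)\cong C^*_r(F_n)$ (which have no non-trivial idempotents, the fact you correctly attribute to the Pimsner--Voiculescu $K$-theory computation, which the paper simply asserts), identify $End(H,f)$ with the von Neumann algebra of the right regular representation (which has many non-trivial projections), and apply the idempotent criterion of Proposition \ref{prop:indecomposable-idempotent}. The only differences are cosmetic: you fill in the density/boundedness bookkeeping that the paper leaves implicit, and your $\mathrm{II}_1$-factor phrasing addresses $n\geq 2$, but for $n=1$ the commutant is $L^{\infty}({\mathbb T})$, which still has abundant non-trivial projections, so the conclusion is unaffected.
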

\begin{proof}
The reduced group $C^*$-algebra  
$C^*_r(F_n)$ is the $C^*$-algebra generated by 
$\{\lambda_s \ | \ s \in F_n\}$ and has no non-trivial idempotents. 
The  group von Neumann algebra $W^*(F_n)$ 
is the von Neumann algebra generated by 
$\{\lambda_s \ | \ s \in F_n\}$ and
has many non-trivial 
idempotents. The  purely algebraic 
group algebra ${\mathbb C}[F_n]$ 
is dense in $C^*_r(F_n)$ with respect to  operator norm 
and dense in $W^*(F_n)$ 
with respect to  weak operator topology. Moreover 
${\mathbb C}[F_n]$ is dense in $\ell^2(F_n)$ 
with respect to $\ell^2$ norm. Define an embedding 
$\eta: W^*(F_n) \rightarrow \ell^2(F_n)$ by 
$\eta(T) = T\delta_1 $. In this sense we have 
inclusions as  set: 
$$
{\mathbb C}[F_n] 
\subset C^*_r(F_n) \subset W^*(F_n) 
\subset \ell^2(F_n). 
$$

Since 
$$
End(H,f)  \cong \{A \in B(\ell^2({\mathbb Z})) \ | \ 
AB = BA \text{ for any }  B \in W^*(F_n )\}
$$ 
is the von Neumann algebra generated by the {\it right}
 regular representation 
and isomorphic to $W^*(F_n)$,  
$End(H,f)$ has many non-trivial idempotent. Hence 
the Hilbert representation $(H,f)$ 
is {\it not} indecomposable. 
On the other hand, 
$$
End(K,U) 
= C^*(\{ \rho_s \ | \  s \in F_n \})  \cong  C^*_r(F_n).
$$ 
Since $C^*_r(F_n)$ has no non-trivial projections, 
$C^*_r(F_n)$ has no non-trivial idempotents. 
Hence the Banach representation $(K,U)$ is indecomposable. 
Since
$$
End(V,T) \cong {\mathbb C}[F_n] \subset  C^*_r(F_n) 
$$ 
have no idempotents, 
the purely algebraic representation $(V,T)$ is also indecomposable. 
\end{proof} 

The same  phenomenon occurs for $(n+1)$-Kronecker quiver 
by a similar argument.

\begin{prop} 
Let $K_{n+1}$ be the $n+1$-Kronecker quiver 
, that is, 
$K_{n+1}$ is a quiver with two vertex $\{1, 2 \}$ and 
$n+1$ edges $\{\alpha_1, \dots, \alpha_{n+1} \}$ such that 
$s(\alpha_k) = 1$ and $r(\alpha_k) = 2$ for $k = 1,\dots, n+1$. 
Let $F_n$ be the (non-abelian) free group of $n$-generators 
$\{a_1, \dots, a_n\}$. 
Define a Hilbert representation $(H,f)$ of $K_{n+1}$ by 
$H_1 = H_2= \ell^2(F_n)$ and 
$f_{\alpha_k}= \lambda_{a_k}$ for $k =1, \dots, n$ and 
$f_{\alpha_{n+1}} = I$. 
We also define a Banach representation  $(K,U)$ and 
a purely algebraic representation $(V,T)$ of $K_{n+1}$ by the 
restriction to $ C^*_r(F_n)$ and ${\mathbb C}[F_n]$ respectively: 
Let  $K_1 = K_2 = C^*_r(F_n)$ and 
$U_{\alpha_k} =  \lambda_{a_k} |_{K_1}$ for $k = 1, \dots, n+1$. 
We also put $V_1 = V_2 = {\mathbb C}[F_n]$ and 
$T_{\alpha_k} =  \lambda_{a_k} |_{V_1}$ for $k = 1, \dots, n+1$. 
Then 
the  purely algebraic representation $(V,T)$ 
and Banach representation $(K,U)$ are indecomposable but 
the Hilbert representation $(H,f)$ is not indecomposable. 
\end{prop}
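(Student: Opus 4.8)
The plan is to reduce the computation of each of the three endomorphism algebras to the one-vertex ($n$-loop) computation of the previous proposition, the only new ingredient being the extra identity edge $\alpha_{n+1}$. First I would take an arbitrary endomorphism $T=(T_1,T_2)$ in the relevant category and write out the intertwining relations $T_2 f_{\alpha_k}=f_{\alpha_k}T_1$. The relation coming from $\alpha_{n+1}$, where $f_{\alpha_{n+1}}=I$, reads $T_2=T_1$; call this common operator $A$. The remaining relations then say $A\lambda_{a_k}=\lambda_{a_k}A$ for $k=1,\dots,n$ (restricted to the appropriate space), so $A$ commutes with left multiplication by every generator, hence with left multiplication by the whole algebra generated by the $\lambda_{a_k}$. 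In this way the two-vertex problem collapses to exactly the loop-quiver commutant already analyzed, and the three cases are distinguished only by which algebra the $\lambda_{a_k}$ are taken to generate.

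For the Hilbert representation $(H,f)$ the set of such $A$ is the commutant of $\{\lambda_{a_k}\}$ in $B(\ell^2(F_n))$, which equals the commutant of the left regular representation, namely the von Neumann algebra generated by the \emph{right} regular representation, isomorphic to $W^*(F_n)$. Since $W^*(F_n)$ contains many nontrivial projections, hence nontrivial idempotents, Proposition~\ref{prop:indecomposable-idempotent} shows that $(H,f)$ is not indecomposable. For the Banach representation $(K,U)$ and the purely algebraic representation $(V,T)$ I would instead invoke the elementary commutant identity $\lambda(\mathcal A)'=\rho(\mathcal A)$ recalled at the beginning of this section, applied to $\mathcal A=C^*_r(F_n)$ and $\mathcal A=\mathbb C[F_n]$ respectively: any endomorphism commuting with all left multiplications is a right multiplication $\rho_b$ with $b=A(1)$, and $b$ lies in the correct algebra because $A$ preserves it. Thus $End(K,U)\cong C^*_r(F_n)$ and $End(V,T)\cong\mathbb C[F_n]$ via $b\mapsto\rho_b$, and since $\rho_b^2=\rho_{b^2}$ the idempotents of the endomorphism algebra correspond exactly to the idempotents of the underlying algebra.

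The conclusion then follows from the idempotent facts: $C^*_r(F_n)$ has no nontrivial projections, hence (every idempotent in a $C^*$-algebra being similar to its range projection) no nontrivial idempotents, so $(K,U)$ is indecomposable; and $\mathbb C[F_n]$, the group algebra of a torsion-free free group, has no nontrivial idempotents, so $(V,T)$ is indecomposable. The routine verifications, that the generators densely generate the relevant algebras in the appropriate topologies and that $b=A(1)$ indeed lands in the correct algebra, are identical to those in the previous proposition. The only genuine external inputs are the three facts about idempotents in $W^*(F_n)$, $C^*_r(F_n)$, and $\mathbb C[F_n]$; of these the absence of nontrivial projections in the reduced free-group $C^*$-algebra is the substantive point, and I would cite rather than reprove it, treating it as the main obstacle of the argument.
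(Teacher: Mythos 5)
Your proposal is correct and follows essentially the same route as the paper: the identity edge forces $T_1=T_2=A$, reducing everything to the loop-quiver commutant computation of the preceding proposition, so that $End(H,f)$, $End(K,U)$, $End(V,T)$ are identified with the right regular algebra $W^*(F_n)$, right multiplications by $C^*_r(F_n)$, and right multiplications by ${\mathbb C}[F_n]$ respectively, after which the idempotent facts (many projections in $W^*(F_n)$, none in $C^*_r(F_n)$ and hence in ${\mathbb C}[F_n]$) decide indecomposability via Proposition~\ref{prop:indecomposable-idempotent}. Your version merely makes explicit the details the paper compresses (unitarity of $\lambda_{a_k}$ giving commutation with inverses, density/continuity, and similarity of idempotents to projections in a $C^*$-algebra).
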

\begin{proof}
Since 
$End(H,f)  \cong \{(A,A) \in B(\ell^2({\mathbb Z}))^2 \ | \ 
AB = BA \text{ for any }  B \in W^*({F_n}) \}$ 
is isomorphic to $W^*({F_n})$ and 
$End(H,f)$ has many non-trivial idempotents, 
the Hilbert representation $(H,f)$ 
is {\it not} indecomposable. 
On the other hand, 
$$
End(K,U) \cong \{(A,A) \in (C_r^*[F_n])^2 \ | \ 
A \in  C_r^*[F_n] \}
$$
and 
$$
End(V,T) \cong \{(A,A) \in {\mathbb C}[F_n]^2 \ | \ 
A \in {\mathbb C}[F_n] \}, 
$$
have no non-trivial idempotents, $(K,U)$ and $(V,T)$ are indecomposable. 
\end{proof}

\bigskip
\noindent
{\bf Example 11. }Let $L_1$ be one-loop quiver, that is, 
$L_1$ is a quiver with one vertex $\{1\}$ and 
$1$-loop $\{\alpha \}$ such that $s(\alpha) = r(\alpha) = 1$. 
Consider two infinite-dimensional spaces the polynomial ring 
${\mathbb C}[z]$ and the Hardy space $H^2({\mathbb T})$. 
Then ${\mathbb C}[z]$ is dense in $H^2({\mathbb T})$ with 
respect to the Hilbert space norm. 

Define a purely algebraic representation $(V,T)$ of $L_1$ 
by $V_1 = {\mathbb C}[z]$ and the 
multiplication operator $T_{\alpha}$ by $z$.  That is, 
$T_{\alpha}h(z) = zh(z)$ for a polynomial $h(z) = \sum_n a_n z^n$. Since 
$End(V,T) \cong {\mathbb C}[z]$ 
have no idempotents, 
the purely algebraic representation $(V,T)$ is indecomposable.

Next we define a Hilbert representation $(H,S)$ by 
$H_1 = H^2({\mathbb T})$ and 
 $S_{\alpha}= T_z$ the Toeplitz operator with the symbol $z$. 
Then  $S_{\alpha}=T_z$ is the multiplication operator by $z$ 
on $H^2({\mathbb T})$ and is identified with the unilateral shift. 
Then 
\begin{align*}
End(H,S) & \cong \{A \in B(H^2({\mathbb T})) \ | \ AT_z = T_zA \} \\
& = \{T_{\phi} \in B(H^2({\mathbb T})) \ | \ 
\phi \in H^{\infty}({\mathbb T}) \}  
\end{align*}
is the algebra of analytic Toeplitz operators 
and isomorphic to $H^{\infty}({\mathbb T})$. 
By the F. and M. Riesz Theorem, if $f \in H^2({\mathbb T})$ has the 
zero set of positive measure, then $f = 0$. Since 
$H^{\infty}({\mathbb T}) = H^2({\mathbb T}) \cap L^{\infty}({\mathbb T})$, 
$H^{\infty}({\mathbb T})$ has no non-trivial idempotents. 
Thus  there exists no non-trivial idempotents which commutes 
with $T_z$ and  Hilbert space $(H,S)$ is indecomposable. 
In this sense, the analytical aspect of Hardy space is 
quite important in our setting.

Any subrepresentation of the purely algebraic representation $(V,T)$ 
is given by the restriction to 
an ideal $J=p(z){\mathbb C}[z]$ for some polynomial $p(z)$. 
Any subrepresentation of the Hilbert representation $(H,S)$ is given 
by an invariant subspace of the shift operator $T_z$. Beurling theorem 
shows that any subrepresentation of $(H,S)$ is given by the 
restriction to an invariant subspace $M = \varphi H^2({\mathbb T})$ for 
some inner function  $\varphi$. For example, if 
an ideal $J$ is defined  by 
$$
J = \{f(z) \in {\mathbb C}[z] \ | \ f(\lambda_1) = \dots = f(\lambda_n)= 0 \}
$$
for some distinct numbers $\lambda_1, \dots \lambda_n \in {\mathbb C}$, then 
the corresponding polynomial $p(z)$ is given by 
$p(z) = (z-\lambda_1) \dots (z-\lambda_n)$. The case of Hardy space 
is much more analytic. We shall identify $H^2({\mathbb T})$ with 
a subspace $H^2({\mathbb D})$ of analytic functions on the  open unit 
disc ${\mathbb D}$.  
If an invariant subspace $M$ is defined by 
$$
M = \{ f \in H^2({\mathbb D}) \ | \ \ f(\lambda_1) = \dots = f(\lambda_n)= 0 \}
$$
for some distinct numbers $\lambda_1, \dots \lambda_n \in {\mathbb D}$,
then the corresponding inner function  $\varphi$ is given by a 
finite Blaschke product 
$$
\varphi(z) = \frac{(z- \lambda_1)}{1-\overline{\lambda_1} z}
\dots \frac{(z- \lambda_n)}{1-\overline{\lambda_n} z}.
$$
Here we cannot use the notion of degree like 
polynomials and we must manage to treat 
orthogonality to find such an inner function  $\varphi$. 

\section{Hilbert representations and relative position of subspaces}

We studied  relative position of subspaces of a Hilbert space in 
\cite{EW1}, \cite{EW2}  and Hilbert representations of quivers 
in \cite{EW3}. In this section we shall describe 
a relation between them, which is similar to purely algebraic 
situation and easy but  quite suggestive. Therefore we shall 
describe it here.

Let $H$ be a Hilbert space and $E_1, \dots E_n$ be $n$-subspaces 
in $H$.  Then we say that  ${\mathcal S} = (H;E_1, \dots , E_n)$  
is a system of $n$-subspaces in $H$.  
Let ${\mathcal T} = (K;F_1, \dots , F_n)$  
be  another system of $n$-subspaces in a Hilbert space $K$. Then  
$\varphi : {\mathcal S} \rightarrow {\mathcal T}$ is called a 
homomorphism if $\varphi : H \rightarrow K$ is a bounded linear 
operator satisfying that  
$\varphi(E_i) \subset F_i$ for $i = 1,\dots ,n$. And 
$\varphi : {\mathcal S} \rightarrow {\mathcal T}$
is called an isomorphism if $\varphi : H \rightarrow K$ is 
an invertible (i.e., bounded  bijective) linear 
operator satisfying that  
$\varphi(E_i) = F_i$ for $i = 1,\dots ,n$. 
We say that systems ${\mathcal S}$ and ${\mathcal T}$ are 
{\it isomorphic} if there is an isomorphism  
$\varphi : {\mathcal S} \rightarrow {\mathcal T}$. This means 
that the relative positions of $n$ subspaces $(E_1, \dots , E_n)$ in $H$ 
and   $(F_1, \dots , F_n)$ in $K$ are same under disregarding angles. 
We denote by 
$Hom(\mathcal S, \mathcal T)$ the set of homomorphisms of 
$\mathcal S$ to $\mathcal T$ and  
$End(\mathcal S) := Hom(\mathcal S, \mathcal S)$ 
the set of endomorphisms on $\mathcal S$. 
 A system $\mathcal S=(H;E_{1},\dots,E_{n})$
of $n$ subspaces is called {\it decomposable} 
if the system $\mathcal S$ is isomorphic to 
a direct sum of two non-zero systems.  
A non-zero system $\mathcal S=(H;E_{1},\cdots,E_{n})$ 
of $n$-subspaces is said to be 
{\it indecomposable} if it is not decomposable.

We recall that strongly irreducible operators $A$ play a 
crucial role to construct indecomposable systems of 
four subspaces. Moreover the commutant $\{A\}'$ corresponds to 
the endomorphism ring. 

\begin{thm}[\cite{EW1}]
For any single operator $A \in B(K)$ on a Hilbert space $K$, let
$\mathcal{S}_A = (H;E_1,E_2,E_3,E_4)$
be the associated  operator system such that 
$H = K \oplus K$ and 
\[
E_{1}=K\oplus 0,
E_{2}=0\oplus K, 
E_{3}=\{(x,Ax); x\in K\},
E_{4}=\{(y,y); y\in K\}. 
\]
Then
$$
End (\mathcal{S}_{A}) = \{ T \oplus T \in B(H) ; 
T \in B(K), \ AT = TA \}
$$
is isomorphic to the commutant $\{A\}$. 
The associated system $\mathcal{S}_A$ of four subspaces 
is indecomposable if and only if $A$ is strongly irreducible. 
Moreover for any operators $A,B  \in B(K)$ on a Hilbert space $K$, 
the associated systems $\mathcal{S}_A$ and $\mathcal{S}_B$ 
are isomorphic if and only if $A$ and $B$ are similar. 
\end{thm}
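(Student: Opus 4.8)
The plan is to reduce all three assertions to a single $2\times 2$ operator-matrix computation on $H = K \oplus K$. Any bounded endomorphism $\Phi$ of the system is a bounded operator on $H$ preserving each $E_i$, so I would write $\Phi = \begin{pmatrix} P & Q \\ R & S \end{pmatrix}$ with $P,Q,R,S \in B(K)$. First I would impose $\Phi(E_1) \subset E_1$ and $\Phi(E_2) \subset E_2$: since $\Phi(x,0) = (Px, Rx)$ must lie in $K \oplus 0$ and $\Phi(0,y) = (Qy, Sy)$ must lie in $0 \oplus K$, this forces $R = 0$ and $Q = 0$, so $\Phi = P \oplus S$ is block diagonal. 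Next, $\Phi(E_4) \subset E_4$ applied to $(y,y)$ gives $Py = Sy$ for all $y$, hence $P = S$ and $\Phi = P \oplus P$. Finally $\Phi(E_3) \subset E_3$ applied to $(x, Ax)$ yields $(Px, PAx) \in \graph(A)$, that is $PAx = A(Px)$ for all $x$, so $PA = AP$. This shows $End(\mathcal{S}_A) = \{P \oplus P : P \in \{A\}'\}$, and $P \oplus P \mapsto P$ is the claimed algebra isomorphism onto $\{A\}'$.

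For the indecomposability statement I would invoke the idempotent criterion for systems of subspaces (the analog of Proposition \ref{prop:indecomposable-idempotent} proved in \cite{EW1}), so that $\mathcal{S}_A$ is indecomposable if and only if its only idempotent endomorphisms are $0$ and $I$. By the first step the idempotents of $End(\mathcal{S}_A)$ are exactly the $P \oplus P$ with $P^2 = P$ and $PA = AP$. Thus $\mathcal{S}_A$ is indecomposable if and only if the only idempotents in the commutant $\{A\}'$ are $0$ and $I$. This last condition is precisely strong irreducibility of $A$: an idempotent $P \in \{A\}'$ produces complementary $A$-invariant closed subspaces $\Im P$ and $\Ker P$ with $\Im P + \Ker P = K$ and $\Im P \cap \Ker P = 0$, and conversely any such (not necessarily orthogonal) decomposition of $A$ yields a non-trivial idempotent in $\{A\}'$. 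Hence $\mathcal{S}_A$ is indecomposable if and only if $A$ is strongly irreducible.

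For the similarity statement I note that $E_1, E_2, E_4$ do not depend on the operator, while $E_3^A = \graph(A)$ and $E_3^B = \graph(B)$. An isomorphism $\Phi : \mathcal{S}_A \to \mathcal{S}_B$ is an invertible operator on $H$ with $\Phi(E_1) = E_1$, $\Phi(E_2) = E_2$, $\Phi(E_4) = E_4$ and $\Phi(E_3^A) = E_3^B$. The same matrix analysis as above, now using invertibility of $\Phi$, forces $\Phi = P \oplus P$ with $P$ invertible, and the condition $\Phi(\graph(A)) = \graph(B)$ becomes $PAx = B(Px)$ for all $x$, i.e. $PA = BP$, so $B = PAP^{-1}$. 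Conversely, if $B = PAP^{-1}$ with $P$ invertible, then $\Phi = P \oplus P$ is an isomorphism $\mathcal{S}_A \to \mathcal{S}_B$. Thus $\mathcal{S}_A \cong \mathcal{S}_B$ if and only if $A$ and $B$ are similar.

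The computation itself is routine bookkeeping; the only genuine point is the identification of strong irreducibility with the absence of non-trivial idempotents in $\{A\}'$. The hard part, conceptually, is that one must allow non-self-adjoint idempotents (oblique, not orthogonal, projections): restricting to orthogonal projections would characterize ordinary irreducibility rather than strong irreducibility. This is exactly the distinction highlighted in Example 1 and in the remark that indecomposability is an isomorphism invariant but not a unitary invariant, and it is what makes the commutant $\{A\}'$, rather than the von Neumann algebra $\{A,A^*\}'$, the relevant object here.
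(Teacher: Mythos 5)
Your proof is correct, and since the paper states this theorem without proof (it is quoted from \cite{EW1}), your argument is exactly the standard one used there: the $2\times 2$ block-matrix computation showing every endomorphism has the form $P\oplus P$ with $P\in\{A\}'$, the idempotent criterion for indecomposability of a system of subspaces, and the graph-preservation condition $PA=BP$ for the similarity statement. You also correctly handle the one genuinely delicate point, namely that idempotents in $\{A\}'$ correspond to oblique (not necessarily orthogonal) invariant decompositions $\Im P \dotplus \Ker P = K$, which is precisely why the commutant $\{A\}'$ and strong irreducibility, rather than $\{A,A^*\}'$ and irreducibility, are the relevant notions.
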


\bigskip
\noindent
{\bf Example 12. } We shall apply the above theorem 
to the famous facts on weighted shift operators and 
analytic function theory, see A. Shields \cite{Sh}. 
Let $a = (a_n)_{n \in \mathbb {N}}$ be 
a bounded sequence and $W_a$ be the associated weighted 
unilateral shift. Then the associated system 
$\mathcal{S}_{W_a}$ of four subspaces is indecomposable 
if and only if $W_a$ is strongly irreducible 
if and only if $a_n \not= 0$ for any $n \in \mathbb {N}$. 
Let $b = (b_n)_{n \in \mathbb {N}}$ be another 
bounded sequence. Then the associated system 
$\mathcal{S}_{W_a}$ and $\mathcal{S}_{W_b}$ are isomorphic 
if and only if $W_a$ and $W_b$  are similar 
if and only if there exist positive constants $C_1$ and 
$C_2$ such that for any $n \in \mathbb {N}$ 
$$
0  < C_1 \leq \frac{|a_1 \dots a_n|}{|b_1 \dots b_n|}\leq C_2 .
$$
Moreover $End (\mathcal{S}_{W_a})$ is isomorphic to the 
commutant $\{W_a\}'$, which is isomorphic to the \lq\lq analytic"
algebra $H^{\infty}(\beta)$ in the sense of \cite{Sh}, that is, 
the  class of formal power series $\phi$ such that 
$\phi H^2(\beta) \subset H^2(\beta)$, where 
${\beta}_0 = 1$, ${\beta}_n = a_0a_1 \dots a_{n-1}$ and 
$$
H^2(\beta) = \{ f | f(z) = \sum_{n=0}^{\infty} c_nz^n, \ 
 \sum _{n=0}^{\infty} |c_n{\beta}_n|^2 < \infty \}.  
$$

It is easy but fundamental to see that 
the study of relative positions of subspaces is reduced 
to the study of Hilbert representations of quivers. In 
particular the indecomposabilty is preserved:  
Let ${\mathcal S} = (H;E_1, \dots , E_n)$  
be a system of $n$-subspaces in a Hilbert space $H$.
Let $R_n=(V,E,s,r)$ be a $n$ subspace quiver 
such that $V = \{1,2,\dots,n,n+1 \}$ and 
$E = \{\alpha_k \ | \ k = 1, \dots, n \}$ with $s(\alpha_k) = k$ 
and $r(\alpha_k) = n+1$ for $k = 1, \dots, n$. 
Then there exists a Hilbert representation $(K,f)$ of $R_n$ such that 
$K_k = E_k$, $K_{n+1} = H$ and $f_{\alpha_k} : E_k \rightarrow H$ is an 
inclusion for  $k = 1, \dots, n$. Then there exists an algebra isomorphism 
$$
\theta:  End(\mathcal S) \rightarrow End(K,f)
$$
such that $\theta(\varphi) = (\varphi|{K_k})_{k \in V}$ for 
$\varphi \in End(\mathcal S)$. Therefore it is clear that the system 
 $\mathcal S$ of $n$ subspaces is indecomposable 
(resp. transitive) if and only if the corresponding Hilbert representation 
$(K,f)$ is indecomposable (resp. transitive).

We shall show a converse in a sense as same as  the purely algebraic 
version. 

\begin{lemma}
Let $\Gamma=(V,E,s,r)$ be a finite quiver without self-loops 
such that $V = \{v_1,\dots, v_n\}$ and $E = \{\alpha_1, \dots, \alpha_m\}$.
Let $(K,f)$ be a Hilbert representation of $\Gamma$. Then 
there exists a system ${\mathcal S} = (H;E_1, \dots , E_{n+m})$
 of $n+m$-subspaces such that 
 $End(K,f)  \cong End(\mathcal S)$. 
\label{lemma:reduction to subspaces}
\end{lemma}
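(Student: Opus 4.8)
The plan is to reconstruct $(K,f)$ as a system of subspaces by separating the vertex data from the edge data: each vertex space becomes a coordinate subspace, and each arrow is recorded as the graph of its operator. Concretely, I would set $H = \bigoplus_{i=1}^{n} K_{v_i}$, which is a Hilbert space since the sum is finite, and write $\iota_v : K_v \to H$ for the canonical isometric inclusion of the $v$-th summand. For the first $n$ subspaces I take the coordinate subspaces $E_i := \iota_{v_i}(K_{v_i})$. For each arrow $\alpha_j : x \to y$, where $x = s(\alpha_j)$ and $y = r(\alpha_j)$, I take the graph subspace
\[
E_{n+j} := \{\, \iota_x(\zeta) + \iota_y(f_{\alpha_j}\zeta) \ : \ \zeta \in K_x \,\},
\]
which is closed because $f_{\alpha_j}$ is bounded. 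This produces the desired system $\mathcal S = (H; E_1, \dots, E_{n+m})$ of $n+m$ subspaces.

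Next I would compute $End(\mathcal S)$. Since $H = \bigoplus_i E_i$ is the orthogonal direct sum of the coordinate subspaces and any endomorphism $\varphi$ must satisfy $\varphi(E_i) \subset E_i$ for every vertex index $i$, $\varphi$ is forced to be \emph{block diagonal}: writing $\varphi$ as an operator matrix $(\varphi_{ij})$ with $\varphi_{ij} : K_{v_j} \to K_{v_i}$, the conditions $\varphi(E_j) \subset E_j$ say exactly $\varphi_{ij} = 0$ for $i \neq j$, so $\varphi = \bigoplus_{v} \varphi_v$ with $\varphi_v \in B(K_v)$. Then imposing $\varphi(E_{n+j}) \subset E_{n+j}$ for an arrow $\alpha_j : x \to y$, and using $\varphi(\iota_x\zeta + \iota_y f_{\alpha_j}\zeta) = \iota_x(\varphi_x\zeta) + \iota_y(\varphi_y f_{\alpha_j}\zeta)$, membership in $E_{n+j}$ forces the $y$-component to be $f_{\alpha_j}$ applied to the $x$-component, i.e.\ $\varphi_y f_{\alpha_j} = f_{\alpha_j}\varphi_x$. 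This is precisely the intertwining relation $\varphi_{r(\alpha)}f_{\alpha} = f_{\alpha}\varphi_{s(\alpha)}$ defining $End(K,f)$.

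Finally I would check that the assignment $\theta : \varphi \mapsto (\varphi_v)_{v \in V}$ is the claimed isomorphism. It is linear and, by the two computations above, it is a bijection of $End(\mathcal S)$ onto $End(K,f)$. It is multiplicative because block-diagonal operators compose componentwise, which matches the composition law $(T \circ S)_v = T_v S_v$ in $End(K,f)$. Hence $\theta$ is an algebra isomorphism and $End(\mathcal S) \cong End(K,f)$.

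The step I expect to be the genuine obstacle, and the exact place where the hypothesis \emph{without self-loops} is indispensable, is the edge computation. The graph subspace $E_{n+j}$ encodes an arrow as a vector with independent $x$- and $y$-coordinates, and matching those two coordinates separately is what yields the intertwining relation. If $\alpha_j$ were a self-loop then $x = y$, the two coordinates would collapse into a single summand $\iota_x(\zeta + f_{\alpha_j}\zeta)$, the graph would no longer separate source from range, and invariance would fail to reproduce $\varphi_x f_{\alpha_j} = f_{\alpha_j}\varphi_x$. For this reason I would carry out the construction only for quivers without self-loops, exactly as assumed in the statement.
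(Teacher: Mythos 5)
Your proposal is correct and follows essentially the same route as the paper: the same ambient space $H=\bigoplus_i K_{v_i}$, the same coordinate subspaces for vertices and graph subspaces for arrows, and the same identification of $End(\mathcal S)$ with block-diagonal operators whose blocks satisfy the intertwining relations, which is exactly $End(K,f)$. Your explicit remark on where the no-self-loop hypothesis enters (the graph must separate source and range coordinates) is a point the paper leaves implicit, but the argument is the same.
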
 
\begin{proof}
Let $H = K_{v_1} \oplus K_{v_2} \oplus \dots \oplus K_{v_n}$. 
Define $E_i = 0 \oplus \dots \oplus K_{v_i} \oplus \dots \oplus 0$ 
for $i = 1,2, \dots , n$. For $j = 1,\dots, m$, consider 
$$
f_{\alpha _{j}} : K_{s(\alpha_j)} \rightarrow K_{r(\alpha_j)}. 
$$
Define 
\begin{align*}
& E_{n + j} = "graph \ f_{\alpha _j}" 
:= \{x=(x_k)_k \in H \ | \\
&x = (0,\dots,0, z, \dots , f_{\alpha _{j}}(z),0 \dots,0) 
 \ z \in K_{s(\alpha_j)}, f_{\alpha _{j}}(z) \in K_{r(\alpha_j)} \}. 
\end{align*}
Then ${\mathcal S} := (H;E_1, \dots , E_{n+m})$ is a system 
of $n+m$-subspaces. 

For $T =(T_{v_k})_k \in End(K,f)$, define a bounded operator 
$S: H \rightarrow H$ by $S = diagonal (T_{v_1},\dots, T_{v_n})$. 
Then it is clear that $S(E_i) \subset E_i$ for $i = 1, \dots, n$. 
Since $T_{r(\alpha_j)}f_{\alpha_j} = f_{\alpha_j}T_{s(\alpha_j)}$, 
$S(E_{n+j}) \subset E_{n+j}$ for $j = 1, \dots m$. 
Hence $\varphi(T) = S$ define a homomorphism 
$\varphi: End(K,f) \rightarrow End\  {\mathcal S}$. 
Conversely 
Let $S$ be in $End\  {\mathcal S}$. Since $S(E_i) \subset E_i$ 
for $i = 1, \dots, n$, we can define bounded operators 
$T_{v_i} : K_{v_i} \rightarrow K_{v_i}$ such that 
$S =  diagonal (T_{v_1},\dots, T_{v_n})$. Since 
$S(E_{n+j}) \subset E_{n+j}$ for $j = 1, \dots m$, we have 
$T_{r(\alpha_j)}f_{\alpha_j} = f_{\alpha_j}T_{s(\alpha_j)}$. 
This shows that $T :=(T_{v_k})_k \in End(K,f)$. Hence 
$\psi(S) = T$ define a homomorphism 
$\psi: End\  {\mathcal S} \rightarrow End(K,f)$.  
Since $\psi \circ \varphi = id$ and $ \varphi \circ \psi = id$, 
$End(K,f)  \cong End(\mathcal S)$. 
\end{proof}

\begin{thm}
Let $\Gamma=(V,E,s,r)$ be a finite quiver.  
Let $(K,f)$ be a Hilbert representation of $\Gamma$. Then 
there exists a natural number $n$ and 
a system ${\mathcal S}$ of $n$-subspaces such that  
 $End(K,f)  \cong End(\mathcal S)$. 
\end{thm}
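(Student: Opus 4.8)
The plan is to reduce the general case to Lemma \ref{lemma:reduction to subspaces}, which already handles quivers without self-loops, by eliminating each loop through a subdivision that leaves the endomorphism algebra unchanged. The only obstruction to applying the lemma directly is the possible presence of self-loops in $\Gamma$, so first I would construct from $\Gamma$ a new finite quiver $\Gamma'$ having no self-loops, together with a Hilbert representation $(K',f')$ of $\Gamma'$ satisfying $End(K',f') \cong End(K,f)$.

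Concretely, for each loop $\alpha : v \rightarrow v$ in $\Gamma$ I would introduce a new vertex $w_{\alpha}$ and replace $\alpha$ by two arrows $\beta_{\alpha} : v \rightarrow w_{\alpha}$ and $\gamma_{\alpha} : w_{\alpha} \rightarrow v$; all non-loop arrows of $\Gamma$ are kept unchanged. On the representation side I would set $K'_{w_{\alpha}} = K_{v}$ and $K'_{u} = K_{u}$ for each original vertex $u$, and define $f'_{\beta_{\alpha}} = f_{\alpha}$, $f'_{\gamma_{\alpha}} = I$, leaving $f'$ equal to $f$ on the non-loop arrows. Since each new arrow joins distinct vertices, $\Gamma'$ has no self-loops, and $(K',f')$ is a genuine Hilbert representation of $\Gamma'$.

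The key step is to verify $End(K',f') \cong End(K,f)$. Given $T' = (T'_{u})_{u} \in End(K',f')$, the compatibility condition across $\gamma_{\alpha}$ reads $T'_{v} I = I T'_{w_{\alpha}}$, forcing $T'_{w_{\alpha}} = T'_{v}$; substituting this into the condition across $\beta_{\alpha}$, namely $T'_{w_{\alpha}} f_{\alpha} = f_{\alpha} T'_{v}$, yields exactly the original loop relation $T'_{v} f_{\alpha} = f_{\alpha} T'_{v}$. Thus restricting $T'$ to the original vertices gives an endomorphism of $(K,f)$, and conversely any $T \in End(K,f)$ extends uniquely to an endomorphism of $(K',f')$ by declaring $T'_{w_{\alpha}} = T_{v}$ for each loop $\alpha$. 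These assignments are mutually inverse, composition-preserving linear maps, and hence establish the algebra isomorphism.

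Finally I would apply Lemma \ref{lemma:reduction to subspaces} to the loop-free quiver $\Gamma'$ and its representation $(K',f')$ to produce a system $\mathcal{S}$ of $n$-subspaces, with $n$ equal to the number of vertices plus the number of arrows of $\Gamma'$, satisfying $End(K',f') \cong End(\mathcal{S})$; composing the two isomorphisms then gives $End(K,f) \cong End(\mathcal{S})$. I expect no serious obstacle beyond the loop elimination itself: the remaining work, checking that the subdivision yields a valid representation and that the two maps between endomorphism algebras respect composition, is immediate from the definitions.
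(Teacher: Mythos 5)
Your proposal is correct and takes essentially the same route as the paper: eliminate the self-loops by adjoining new vertices and using an identity-operator arrow to force the endomorphism component at each new vertex to coincide with the one at the original vertex, then apply Lemma \ref{lemma:reduction to subspaces} and compose the two isomorphisms. The only (immaterial) difference is the gadget: you subdivide each loop $\alpha : v \rightarrow v$ into a $2$-cycle $v \rightarrow w_{\alpha} \rightarrow v$ with one new vertex per loop, whereas the paper replaces all $n$ loops at a vertex $v$ by $(n+1)$ Kronecker-like parallel arrows $v \rightarrow v'$ (the extra arrow carrying the identity), using a single new vertex per looped vertex; both satisfy the lemma's hypothesis, which forbids self-loops but not cycles.
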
 
\begin{proof}
Let $\Gamma=(V,E,s,r)$ be a finite quiver and 
$(K,f)$ a Hilbert representation of $\Gamma$. 
Then there exists another finite quiver 
$\Gamma'=(V',E',s,r)$ without self-loops and 
a Hilbert representation $(K',f')$ of $\Gamma'$ 
such that $End(K,f) \cong End(K',f')$. 
In fact, replace "locally" each  n-loop 
$\alpha_1, \dots, \alpha_n : v \rightarrow v$ by $(n+1)$-
Kronecker-like edges  $\beta_1, \dots, \beta_{n+1}
 : v \rightarrow v'$ to get $\Gamma'$. Any edge 
$\gamma: v \rightarrow w (\not=v)$ is also  replaced by 
$\gamma ': v' \rightarrow w$. Then 

Define $(K',f')$ by $K'_v = K_v$ and $K'_{v'} = K_v$, 
and $f'_{\beta_1} = f_{\alpha_1}, \dots, f'_{\beta_n} = f_{\alpha_n}$ 
and $f'_{\beta_{n+1}} = id$. We also put 
$f'_{\gamma '}= f_{\gamma}$. 
Then we have  $End(K,f) \cong End(K',f')$.
Next apply the lemma \ref{lemma:reduction to subspaces} 
for the finite quiver 
$\Gamma'=(V',E',s,r)$ without self-loops and 
a Hilbert representation $(K',f')$ of $\Gamma'$.  
\end{proof}

\end{document}